\documentclass[11pt]{amsart}
\usepackage{amsfonts, amsmath, amssymb}
\usepackage{pb-diagram}
\usepackage{graphicx}
\usepackage[pdftex, pdfstartview=FitH]{hyperref}

\input xy
\xyoption{all}

\parskip = 2pt

\newcommand{\R}{{\mathbb R}}

\newcommand{\Graff}{{\rm Graff}}

\newcommand{\Aff}{{\rm Aff}}
\newcommand{\Ann}{{\rm Ann}}

\newcommand{\Length}{{\rm Length}}

\newtheorem{theorem}{Theorem}[section]
\newtheorem{corollary}[theorem]{Corollary}
\newtheorem{lemma}[theorem]{Lemma}
\newtheorem{proposition}[theorem]{Proposition}


\newenvironment{maintheorem}[1]
    {\innertheorem}
    {\endinnertheorem}

\theoremstyle{definition}
\newtheorem{definition}[theorem]{Definition}
\newtheorem*{definition*}{Definition}
\newtheorem{question}[theorem]{Question}

\title{Periodic Solutions of Hilbert's Fourth Problem}

\author{J.C. \'Alvarez Paiva}

\address{J.C. \'Alvarez Paiva, U.M.R. CNRS 8524
         U.F.R. de Math\'ematiques, 59655 Villeneuve d'Ascq C\'edex, France}

\email{alvarez@math.uni-lille1.fr}

\author{J. Barbosa Gomes}

\address{J. Barbosa Gomes, Departamento de Matem\'atica, Universidade Federal de Juiz de Fora, Juiz de Fora, MG, Brazil, 36036-900}

\email{barbosa.gomes@ufjf.edu.br}


\thanks{The first author was partially supported by the grant ANR12-BS01-0009}
\thanks{The second author was partially supported by CAPES Foundation - Brazil - BEX 002789/2015-09}

\keywords{Hilbert's fourth problem, Finsler metric, symmetric spaces}

\subjclass[2010]{58B20; 49N45}

\begin{document}%

\begin{abstract} It is shown that a possibly irreversible $C^2$ Finsler metric on the torus, or on any other compact Euclidean space form, whose geodesics are straight lines is the sum of a flat metric and a closed $1$-form. This  is used to prove that if $(M,g)$ is a compact Riemannian symmetric space of rank greater than one and $F$ is a {\sl reversible} $C^2$ Finsler metric on $M$ whose unparametrized geodesics coincide with those of $g$, then $(M,F)$ is a Finsler symmetric space.
\end{abstract}
\maketitle
\begin{flushright}
    \begin{small}
        \parbox{4in} {{\sl The
theorem of the straight line as the shortest distance between
two points and the essentially equivalent theorem of Euclid
about the sides of a triangle, play an important part not
only in number theory but also in the theory of surfaces
and in the calculus of variations. For this reason, and
because I believe that the thorough investigation of the
conditions for the validity of this theorem will throw a new
light upon the idea of distance, as well as upon other
elementary ideas, {\it e.g.}, upon the idea of the plane, and
the possibility of its definition by means of the idea of
the straight line, the construction and systematic treatment of
the geometries here possible seem to me desirable. }} \\
        --- D. Hilbert
    \end{small}
\end{flushright}


\section{Introduction}

A natural amalgam of Hilbert's fourth and eighteenth problems is {\sl to study the continuous (possibly asymmetric) metrics on $\R^n$ for which straight lines are geodesics and which are invariant under the action of a Euclidean crystallographic group.} In other words, the problem asks to study the different possible {\sl metrizations} of compact Euclidean space forms: the metrics on these spaces for which the lines of the Euclidean structure are geodesics. 

Very roughly speaking, there is a natural identification between signed measures on the $n$-dimensional manifold of affine hyperplanes in $\mathbb{R}^n$ and symmetric {\sl projective metrics}, metrics for which straight lines are geodesics (see the next section for a short account of Busemann and Pogorelov's solution of Hilbert's fourth problem and~\cite{Alvarez:2003,Papadopoulos:2014} for a historical account). If to this we add the asymmetric projective metrics, of which many examples exist but whose general construction we still ignore, we see that the class of projective metrics is very large. Nevertheless, we have the following 
 
\begin{maintheorem}{I} \label{metric-version}
Let the distance function $d$ define a possibly asymmetric metric on $\R^n$ for which straight lines are geodesics. If the metric is invariant under a Euclidean crystallographic group and is derived from a $C^2$ Finsler metric, then there exists a possibly asymmetric norm $\| \cdot \|$ and a function $f$ on $\R^n$ such that 
$$
d(\mathbf{x},\mathbf{y}) = \| \mathbf{y - x} \| + f(\mathbf{y}) - f(\mathbf{x})
$$ 
for every pair of points $\mathbf{x}$, $\mathbf{y}$ in $\R^n$.
\end{maintheorem}

The differential-geometric formulation of this rigidity result is as follows:

\begin{maintheorem}{I${}^\prime$} \label{differential-geometric-version}
Let $(M,g)$ be a compact flat Riemannian manifold. If $F$ is a $C^2$ Finsler metric on $M$ whose unparametrized geodesics coincide with those of $g$, then it is the sum of a flat Finsler metric and a closed $1$-form. 
\end{maintheorem}

The regularity of the metric is crucial. There are many more examples of continuous projective metrics on tori. For instance, in~\cite[p. 221]{Busemann:1955} Busemann gives the following example of a continuous periodic metric on the plane for which straight lines are geodesics:
\begin{align*}
d((x_1,x_2),(y_1,y_2)) := & \sqrt{(x_1 - y_1)^2 + (x_2 - y_2)^2} \\
& + |7x_2 + \sin(2\pi x_2) - 7y_2 - \sin(2\pi y_2) | .
\end{align*}
Although we were not able to construct $C^1$ examples which are not of the form prescribed in
Theorems~\ref{metric-version} or~\ref{differential-geometric-version}, we have good reason to think that they exist.  

As we fleetingly mentioned earlier, the construction of all asymmetric projective metrics in $\mathbb{R}^n$ is still unknown. Nevertheless Theorem~\ref{metric-version} allows us to exhibit an interesting feature of asymmetric projective metrics. Namely, that if they originate from a $C^2$ Finsler metric, they are determined, modulo a differential, by what happens ``at infinity". 

\begin{maintheorem}{II} \label{determination-at-infinity}
If two $C^2$ projective Finsler metrics defined on an open convex subset $\mathcal{O} \subset \mathbb{R}^n$ agree outside of some compact set, then their difference is the differential of a $C^3$ function on $\mathcal{O}$. 
\end{maintheorem}

Another important consequence of Theorem~\ref{differential-geometric-version} is the following 

\begin{maintheorem}{III} \label{symmetric-spaces}
Let $(M,g)$ be a compact Riemannian symmetric space of rank greater than one. If $F$ is a reversible $C^2$ Finsler metric on $M$ whose unparametrized geodesics coincide with those of $g$, then $F$ is invariant under the geodesic symmetries of $(M,g)$. In other words, $(M,F)$ is a symmetric Finsler space. 
\end{maintheorem}

Note that for reversible $C^2$ Finsler metrics this is the solution of the extension of Hilbert's fourth problem to compact symmetric spaces of rank greater than one. For rank one spaces this problem is completely open except in the case of spheres and their quotients (see the discussion around Problem~11 
in~\cite[p.22]{Alvarez:survey}). 

A notable corollary of Theorem~\ref{symmetric-spaces} is that {\it if $F$ is a $C^2$ reversible Finsler metric on compact, connected Lie group $G$ which is neither $SU(2)$ nor $SO(3)$ and $F$ has the same unparametrized geodesics as a bi-invariant Riemannian metric, then it is a bi-invariant metric.} 

All of the theorems stated above generalize from Finsler metrics to $1$-densities, also known as parametric integrands of degree one. Like Finsler metrics, these are Lagrangians that are homogeneous of degree one in the velocities, but no positivity nor convexity requirement is made on the integrand. We have cast the statements of the main results in the Finsler setting not only because Hilbert's fourth problem was the main motivation in these researchers, but because our proof of Theorems~\ref{metric-version} and~\ref{differential-geometric-version} uses the convexity of the Finsler metrics, and the Finsler case leads directly to the more general result by a simple trick. On the other hand, the extension to $1$-densities allows us to cast Theorem~\ref{metric-version} as a characterization of those solutions to Hamel's system of differential equations that satisfy certain smoothness and symmetry conditions:

\begin{maintheorem}{I${}^{\prime \prime}$}\label{result-for-densities}
Let $L(\mathbf{x}, \mathbf{v})$ be a continuous, real-valued function on $\mathbb{R}^n \times  \mathbb{R}^n $ that is $C^2$ on $\mathbb{R}^n \times  \mathbb{R}^n \! \setminus \! \{\mathbf{0}\}$ and which, on this open set, satisfies the system of differential equations
$$
\frac{\partial^2 L}{\partial x_i \partial v_j} = \frac{\partial^2 L}{\partial x_j \partial v_i} \ \ 
(1 \leq i, j \leq n) .
$$
If $L(\mathbf{x} + \mathbf{m}, \lambda \mathbf{v}) = \lambda L(\mathbf{x}, \mathbf{v})$ for all  $\lambda > 0$ and all integer vectors $\mathbf{m} \in \mathbb{Z}^n$, then there exists a $C^3$ function
$f : \mathbb{R}^n \rightarrow \mathbb{R}$ such that $L(\mathbf{x}, \mathbf{v}) - df(\mathbf{x})\cdot \mathbf{v}$
is independent of the $\mathbf{x}$ variable. 
\end{maintheorem}

The plan of the paper is as follows. In Section~\ref{History-section} we recall previous work on Hilbert's fourth problem and, in passing, prove a weaker version of Theorem~\ref{metric-version} and~\ref{differential-geometric-version}, where we assume the Finsler metric is smooth and reversible. 

The two-dimensional case of Theorems~\ref{metric-version} and~\ref{differential-geometric-version} is proved in Section~\ref{2D-case-section}. In Section~\ref{Minowski-Randers-section} we consider {\sl Minkowski-Randers} metrics. These are Finsler metrics on $\mathbb{R}^n$ which are the sum of a 
translation-invariant metric and a $1$-form. The main result here is that a Finsler metric is
a Minkowski-Randers metric if and and only if it induces Minkowski-Randers metrics on every two-dimensional affine subspace. This result is used in Section~\ref{Applications-section} to prove Theorems~\ref{metric-version} and~\ref{differential-geometric-version}, which are later applied to prove Theorems~\ref{determination-at-infinity} and~\ref{symmetric-spaces}. 

\medskip
\noindent {\bf Acknowledgments.} The authors thank K.~Tzanev and V.~Matveev for helpful 
discussions. Most of the work in this paper was done while the second author enjoyed a postdoctoral stay
at the Universit\'e de Lille~1 and he is thankful for its hospitality during this time.

\section{Background on Hilbert's fourth problem} \label{History-section}

As the fourth of his famous list of problems delivered at the International Congress of Mathematicians held in Paris in the year 1900, Hilbert asked for the construction and systematic treatment of all geometries for which the straight line is the shortest distance between two points. The original wording is somewhat vague, but the modern interpretation, due to Busemann~\cite{Busemann:Hilbert}, is a clear and enticing problem: {\sl to construct and study all continuous, possibly asymmetric metrics defined on open, convex subsets of real projective spaces for which geodesics lie on projective lines.}  If we leave the real projectives spaces aside and consider an arbitrary open, convex subset $\mathcal{O} \subset \mathbb{R}^n$, the problem translates into constructing and studying all continuous functions 
$$
d :  \mathcal{O} \times  \mathcal{O} \longrightarrow  \mathbb{R}
$$
satisfying the properties
\begin{itemize}
\item $d(\mathbf{x}, \mathbf{y}) \geq 0$.
\item $d(\mathbf{x}, \mathbf{y}) = 0$ if and only if $\mathbf{x} = \mathbf{y}$.
\item $d(\mathbf{x}, \mathbf{y}) + d(\mathbf{y}, \mathbf{z}) \geq d(\mathbf{x}, \mathbf{z})$.
\item $d(\mathbf{x}, \mathbf{y}) + d(\mathbf{y}, \mathbf{z}) = d(\mathbf{x}, \mathbf{z})$ whenever
$\mathbf{y}$ belongs to the line segment $\mathbf{xz}$.
\end{itemize}

In this generality the problem remains unsolved, but if we require that the metric be symmetric (i.e., $d(\mathbf{x}, \mathbf{y}) = d(\mathbf{y}, \mathbf{x})$), then Busemann and Pogorelov have provided a delightful solution (see~\cite{Busemann:Hilbert} and \cite{Pogorelov:1978}), which we proceed to describe.

Let $\Graff_{n-1}(\mathcal{O})$ be the manifold of all affine hyperplanes intersecting the open, convex set $\mathcal{O} \subset \mathbb{R}^n$, and let $\mu$ be a signed Borel measure on $\Graff_{n-1}(\mathcal{O})$ satisfying the following two conditions: 
\begin{enumerate}
\item The measure of the set of hyperplanes passing through any given point in $\mathcal{O}$ is zero.
\item If $\mathbf{x}$, $\mathbf{y}$, and $\mathbf{z}$ $(\mathbf{x} \neq \mathbf{y, z})$ are any three points in $\mathcal{O}$, the set of hyperplanes separating the point $\mathbf{x}$ from the segment  $\mathbf{yz}$ has strictly positive measure.
\end{enumerate}
We follow~\cite{Alvarez-Gelfand-Smirnov} in calling such measures {\sl quasipositive}, and remark in passing that it is not clear how to construct them in general---although in two-dimensions they are just the positive Borel measures satisfying~(1). 

Busemann's idea is to use a reverse form of Crofton's formula in integral geometry: given a quasipositive measure $\mu$ on  $\Graff_{n-1}(\mathcal{O})$, define the distance function $d_\mu(\mathbf{x}, \mathbf{y})$ as the measure of the set of all hyperplanes intersecting the segment $\mathbf{xy}$. It is easily verified
that the resulting distance is continuous, symmetric, and projective. Pogorelov's contribution was to show that {\sl in two dimensions every continuous, symmetric, projective metric is obtained from Busemann's construction. In higher dimensions it can be obtained as a limit of the metrics constructed by Busemann.} 
If we want to avoid this passage to the limit, Szab\'o in~\cite{Szabo:1986} showed that one can, and must, extend Busemann's construction from measures to certain classes of distributions. 

The central concept in Pogorelov's solution and in most of this paper is that of Finsler space.

\begin{definition}
A function $\| \cdot \| : V \rightarrow [0, \infty)$ on a finite-dimensional real vector space $V$ is a {\sl Minkowski norm} if it is $C^2$ outside the origin and
\begin{itemize}
\item $\| \mathbf{x} \| = 0$ if and only if $\mathbf{x} = \mathbf{0}$.
\item $\| \lambda \mathbf{x} \| = \lambda \| \mathbf{x} \| $ for any real positive number $\lambda $.
\item $\| \mathbf{x+y}\| \leq \|\mathbf{x}\| +  \|\mathbf{y}\|$
\item The Hessian, computed in any affine system of coordinates, of the function $\mathbf{x} \mapsto  \|\mathbf{x}\|^2$ is positive-definite outside the origin. {\sl In particular, the unit sphere in $V$ and that of its dual normed space $V^*$ are strictly convex.}
\end{itemize}
\end{definition}

\begin{definition}
A  $C^k$ $(k \geq 2)$ {\sl Finsler metric} on a smooth manifold $M$ is a continuous function $F : TM \longrightarrow [0, \infty)$ that is $C^k$ outside the zero section and for which its restriction to any tangent space is a Minkowski norm.  
\end{definition}

Given a Finsler metric on manifold $M$ we define the length of a smooth parametrized curve 
$\gamma:[a,b] \longrightarrow M$  as
$$
\Length(\gamma) := \int_a^b F(\dot{\gamma}(t)) \, dt \ ,
$$
and the distance between two points $\mathbf{x}$ and $\mathbf{y}$ in the same path connected component of $M$
as the infimum of the lengths of all smooth curves starting at  $\mathbf{x}$ and ending at $\mathbf{y}$. The metric will be symmetric (i.e., $d(\mathbf{x}, \mathbf{y}) =  d(\mathbf{y}, \mathbf{x})$) if only if the Finsler metric is {\sl reversible:} $F(\mathbf{v_x}) = F(\mathbf{-v_x})$. 

Projective Finsler metrics play an important role in Hilbert's fourth problem because every continuous, symmetric, projective metric is a limit of projective reversible Finsler metrics (see \cite[pp.~31--45]{Pogorelov:1978}). Pogorelov's key theorem (as presented, in 
\cite{Szabo:1986, Alvarez-Gelfand-Smirnov, Alvarez-Fernandes:2007}) is the following

\begin{theorem}[Pogorelov~\cite{Pogorelov:1978}]\label{Pogorelov-theorem}
If $\mu$ is a smooth quasipositive measure on the manifold $\Graff_{n-1} (\mathbb{R}^n)$ of all affine hyperplanes, the metric $d_\mu$ obtained through Busemann's construction is the distance function of a smooth, reversible projective Finsler metric on $\mathbb{R}^n$. Conversely, if $F$ is a smooth, reversible projective Finsler metric on $\mathbb{R}^n$, then there exists a unique smooth quasipositive measure $\mu$ on 
$\Graff_{n-1} (\mathbb{R}^n)$ such that $d_\mu$ is the distance associated to $F$.
\end{theorem}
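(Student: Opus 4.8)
The plan is to make Busemann's construction completely explicit through the standard parametrization of $\Graff_{n-1}(\R^n)$ by pairs $(\mathbf{u}, p) \in S^{n-1} \times \R$, where the pair represents the hyperplane $\{\mathbf{y} : \langle \mathbf{u}, \mathbf{y}\rangle = p\}$, subject to the antipodal identification $(\mathbf{u},p) \sim (-\mathbf{u},-p)$. A smooth measure $\mu$ is then $\phi(\mathbf{u},p)\, d\mathbf{u}\, dp$ for a smooth $\phi$ that is even under this identification, and condition~(1) is exactly what rules out atoms. Since the hyperplane $(\mathbf{u},p)$ separates $\mathbf{x}$ from $\mathbf{y}$ precisely when $p$ lies between $\langle\mathbf{u},\mathbf{x}\rangle$ and $\langle\mathbf{u},\mathbf{y}\rangle$, Busemann's distance becomes (up to a normalizing constant absorbed into the $\tfrac12$ from the antipodal quotient)
\[
d_\mu(\mathbf{x},\mathbf{y}) = \frac{1}{2}\int_{S^{n-1}}\left(\int_{\langle \mathbf{u},\mathbf{x}\rangle}^{\langle \mathbf{u},\mathbf{y}\rangle}\phi(\mathbf{u},p)\,dp\right)d\mathbf{u}.
\]
Differentiating $d_\mu(\mathbf{x},\mathbf{x}+t\mathbf{v})$ at $t=0^+$ extracts the candidate Finsler metric
\[
F_{\mathbf{x}}(\mathbf{v}) = \frac{1}{2}\int_{S^{n-1}} |\langle \mathbf{u},\mathbf{v}\rangle|\,\phi\big(\mathbf{u},\langle \mathbf{u},\mathbf{x}\rangle\big)\,d\mathbf{u},
\]
which I recognize as the \emph{cosine transform} of the local density $\rho_{\mathbf{x}}(\mathbf{u}) := \phi(\mathbf{u},\langle \mathbf{u},\mathbf{x}\rangle)$ of hyperplanes through $\mathbf{x}$. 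A sanity check against the rotation-invariant $\mu$ recovers the Euclidean norm, which is reassuring.

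For the forward direction I would first verify that $F$ is a genuine Finsler metric. Smoothness off the zero section follows from smoothness of $\phi$ together with the smoothing properties of the cosine transform; positive homogeneity and the triangle inequality are immediate from the fact that each $|\langle\mathbf{u},\cdot\rangle|$ is a seminorm and integration preserves convexity. The delicate axiom is that the Hessian of $F_{\mathbf{x}}^2$ be positive definite, equivalently that the unit ball of $F_{\mathbf{x}}$ be strictly convex; here I would use quasipositivity condition~(2), which prevents $\rho_{\mathbf{x}}$ from degenerating onto a lower-dimensional cone and is precisely the hypothesis guaranteeing that this cosine transform is a Minkowski norm rather than merely a convex seminorm. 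Finally, to identify $d_\mu$ with the length metric $d_F$, I would invoke the additivity of $d_\mu$ along segments (the defining Crofton property: a hyperplane meeting $\mathbf{xz}$ meets exactly one of $\mathbf{xy}$, $\mathbf{yz}$ when $\mathbf{y}$ lies between them), so that the $F$-length of a segment integrates to $d_\mu$ of its endpoints, while quasipositivity yields the strict triangle inequality showing segments are the unique minimizers.

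For the converse, the restriction of $F$ to each $T_{\mathbf{x}}\R^n$ is a Minkowski norm, and I would invert the cosine transform pointwise in $\mathbf{x}$: since this transform is a bijection of the space of smooth even functions on $S^{n-1}$ (diagonal with nonzero eigenvalues on even spherical harmonics), each $F_{\mathbf{x}}$ determines a unique smooth even density $\rho_{\mathbf{x}}$, and uniqueness of $\mu$ will follow from this injectivity. The strict convexity of $F$ translates into the quasipositivity of the recovered density — this is the point where signed measures are genuinely needed in dimension $\geq 3$ (in dimension two every symmetric convex body is a zonoid, so $\rho_{\mathbf{x}}\geq 0$, matching the remark that in the plane quasipositive measures are just the positive ones). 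The remaining, and essential, step is to assemble the family $\{\rho_{\mathbf{x}}\}$ into a single function $\phi(\mathbf{u},p)$, i.e.\ to show that $\rho_{\mathbf{x}}(\mathbf{u})$ depends on $\mathbf{x}$ only through $\langle\mathbf{u},\mathbf{x}\rangle$. This is exactly where projectivity enters: because geodesics are straight lines, $d(\mathbf{x},\mathbf{y}) = \int_0^1 F_{\mathbf{x}+t(\mathbf{y}-\mathbf{x})}(\mathbf{y}-\mathbf{x})\,dt$, and differentiating this identity and comparing with the Crofton form — equivalently, imposing the symmetry of mixed partials (the Hamel condition) — forces the $\mathbf{x}$-gradient of $\rho_{\mathbf{x}}(\mathbf{u})$ to point along $\mathbf{u}$, giving $\rho_{\mathbf{x}}(\mathbf{u})=\phi(\mathbf{u},\langle\mathbf{u},\mathbf{x}\rangle)$. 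Setting $\mu=\phi\,d\mathbf{u}\,dp$ and applying the forward direction then gives $d_\mu=d_F$.

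I expect the main obstacles to be twofold. The harder analytic point is controlling the inversion of the cosine transform simultaneously in regularity and in sign: producing a \emph{smooth} density and identifying the precise convexity-to-quasipositivity correspondence that characterizes which signed densities yield Minkowski norms. The harder structural point is the gluing step in the converse — showing rigorously that the projective (straight-line) hypothesis is equivalent to the consistency condition that lets the pointwise densities descend to a function on the $(n)$-dimensional hyperplane manifold. Both the forward identity $d_\mu=d_F$ and the converse gluing hinge on translating the first-order projectivity condition into the Crofton additivity of the hyperplane measure, and I would organize the proof so that this translation is established once and used in both directions.
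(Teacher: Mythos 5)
The paper contains no proof of this statement: it is imported verbatim as Pogorelov's theorem, with the proof deferred to the cited literature (Pogorelov's book, and the presentations of Szab\'o and of \'Alvarez et al.\ that the paper explicitly names). So your proposal can only be measured against those standard proofs, and at the level of architecture it reconstructs them faithfully: parametrize cooriented hyperplanes by $S^{n-1}\times\R$, recognize the infinitesimal metric as the cosine transform of the density $\rho_{\mathbf{x}}(\mathbf{u})=\phi(\mathbf{u},\langle\mathbf{u},\mathbf{x}\rangle)$ of hyperplanes through $\mathbf{x}$, verify $d_\mu=d_F$ by Fubini (Crofton additivity along segments), invert the cosine transform fiberwise for the converse, and use Hamel's equations to show $\rho_{\mathbf{x}}(\mathbf{u})$ depends on $\mathbf{x}$ only through $\langle\mathbf{u},\mathbf{x}\rangle$. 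Your organizing principle---prove once that projectivity is equivalent to Crofton additivity and use it in both directions---is exactly how those treatments are structured, and your remarks on uniqueness via injectivity of the cosine transform and on the two-dimensional zonoid phenomenon match the paper's own aside that in the plane quasipositive measures are simply positive ones.

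Three points need repair or genuine work. First, a concrete slip: your displayed formula for $d_\mu$ is \emph{antisymmetric} in $\mathbf{x}$ and $\mathbf{y}$---swapping them flips the sign of the inner integral, while the evenness $\phi(-\mathbf{u},-p)=\phi(\mathbf{u},p)$ makes the $\mathbf{u}$-integrand invariant under $\mathbf{u}\mapsto-\mathbf{u}$, so as written $d_\mu(\mathbf{y},\mathbf{x})=-d_\mu(\mathbf{x},\mathbf{y})$; the inner integral must run over the \emph{unordered} interval with endpoints $\langle\mathbf{u},\mathbf{x}\rangle$ and $\langle\mathbf{u},\mathbf{y}\rangle$ (your subsequent formula for $F$, with $|\langle\mathbf{u},\mathbf{v}\rangle|$, is the correct one). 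Second, in the gluing step the actual tool should be named: Hamel's equations give $\int_{S^{n-1}}\mathrm{sgn}(\langle\mathbf{u},\mathbf{v}\rangle)\bigl(u_j\,\partial_{x_i}\rho-u_i\,\partial_{x_j}\rho\bigr)\,d\mathbf{u}=0$ for all $\mathbf{v}$, and since the integrand is odd in $\mathbf{u}$, it is the injectivity of the hemispherical ($\mathrm{sgn}$) transform on odd functions that yields $u_j\,\partial_{x_i}\rho_{\mathbf{x}}(\mathbf{u})=u_i\,\partial_{x_j}\rho_{\mathbf{x}}(\mathbf{u})$, i.e.\ $\nabla_{\!\mathbf{x}}\rho\parallel\mathbf{u}$; your phrase ``comparing with the Crofton form'' gestures at this, but the injectivity statement is the content. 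Third---the serious gap, which you partly flag yourself---the equivalence between quasipositivity condition~(2) and the paper's Finsler axiom (positive-definite Hessian of $F^2$) is asserted, not proved, and it is not a pointwise tautology: condition~(2) is a strict positivity condition on \emph{open} sets of separating hyperplanes, and differentiating it twice along a degenerating family of triangles yields a priori only semidefiniteness of the form $\boldsymbol{\xi}\mapsto\int_{\mathbf{u}\perp\mathbf{v}}\langle\mathbf{u},\boldsymbol{\xi}\rangle^2\rho_{\mathbf{x}}(\mathbf{u})\,d\sigma$; conversely, extracting condition~(2) from strong convexity of the recovered signed density requires an argument, not just the observation that the density ``does not degenerate.'' This two-way translation between convexity and quasipositivity is precisely where Pogorelov and Szab\'o expend their effort, so while your skeleton is the right one, the proof stands or falls on supplying that step.
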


Using Pogorelov's theorem it is relatively easy to prove the following weak version of Theorem~\ref{metric-version}:

\begin{theorem}\label{weak-metric-theorem}
Let $d : \mathbb{R}^n \times \mathbb{R}^n \rightarrow [0, \infty)$ be a symmetric, projective 
metric derived from a smooth Finsler metric on $\mathbb{R}^n$. If $d$ is invariant under the action of some 
Euclidean crystallographic group, then there exists a (symmetric) norm $\| \cdot \|$ on $\mathbb{R}^n$ such that 
$$
d(\mathbf{x}, \mathbf{y}) = \| \mathbf{y - x} \|
$$
for every pair of points $\mathbf{x}$, $\mathbf{y}$ in $\mathbb{R}^n$.
\end{theorem}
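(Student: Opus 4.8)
The plan is to transport the hypotheses onto Pogorelov's measure via the uniqueness clause of Theorem~\ref{Pogorelov-theorem} and then exploit the rigidity of lattice-invariant smooth densities on the space of hyperplanes.

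First I would use Theorem~\ref{Pogorelov-theorem}: since $d$ is derived from a smooth, reversible, projective Finsler metric $F$, there is a \emph{unique} smooth quasipositive measure $\mu$ on $\Graff_{n-1}(\R^n)$ with $d = d_\mu$. Each element $g$ of the crystallographic group $\Gamma$ is an affine isometry of $(\R^n,d)$ and so acts on $\Graff_{n-1}(\R^n)$ by sending a hyperplane $H$ to $gH$. Because the set of hyperplanes meeting the segment $[g\mathbf{x},g\mathbf{y}]$ is exactly the image under $g$ of the set of hyperplanes meeting $[\mathbf{x},\mathbf{y}]$, the pushforward $g_*\mu$ also induces the distance $d$; uniqueness then forces $g_*\mu = \mu$. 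Thus $\mu$ is $\Gamma$-invariant. By Bieberbach's first theorem the translations contained in $\Gamma$ form a full-rank lattice $\Lambda \subset \R^n$, so in particular $\mu$ is invariant under every translation $T_v$ with $v \in \Lambda$.

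Next I would write the measure in coordinates. Parametrizing an affine hyperplane as $H_{\xi,p} = \{\mathbf{x} : \langle \mathbf{x}, \xi\rangle = p\}$ with $\xi \in S^{n-1}$ and $p \in \R$ (identifying $(\xi,p)$ with $(-\xi,-p)$), the smooth measure $\mu$ has a smooth density $f(\xi,p)\,d\xi\,dp$. A translation $T_v$ carries $H_{\xi,p}$ to $H_{\xi,\,p+\langle v,\xi\rangle}$, so invariance under $\Lambda$ reads $f(\xi,p) = f\bigl(\xi,\,p+\langle v,\xi\rangle\bigr)$ for every $v \in \Lambda$. The key observation is that for all $\xi$ outside a set of measure zero the subgroup $\{\langle v,\xi\rangle : v \in \Lambda\} \subset \R$ is dense; since $f$ is continuous, this forces $p \mapsto f(\xi,p)$ to be constant for such $\xi$, and continuity of $f$ in $\xi$ then extends this to every direction. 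Hence $f$ is independent of $p$, which is precisely the statement that $\mu$ is invariant under \emph{all} translations of $\R^n$.

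Consequently $d_\mu$ is translation-invariant, and setting $\|\mathbf{w}\| := d(\mathbf{0},\mathbf{w})$ gives $d(\mathbf{x},\mathbf{y}) = \|\mathbf{y}-\mathbf{x}\|$; reversibility of $F$ makes $\|\cdot\|$ symmetric, and it is a genuine norm because it is the distance function of the now translation-invariant Finsler metric $F$, i.e.\ of a Minkowski norm. The step I expect to be the main obstacle is the density argument: one must verify that the projection of the lattice $\Lambda$ onto a generic normal direction is dense in $\R$ (equivalently, that the directions with discrete image form a measure-zero set) and then combine this with the smoothness of $f$ to upgrade invariance under a single lattice into genuine translation invariance of the whole measure.
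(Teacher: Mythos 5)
Your proof is correct and is essentially the paper's own argument: you transfer $\Gamma$-invariance to the Pogorelov measure via the uniqueness clause of Theorem~\ref{Pogorelov-theorem} (this is exactly the content of the paper's Lemma~\ref{equivariance}), apply Bieberbach's first theorem, and upgrade lattice invariance to full translation invariance via density of $\{\langle v,\xi\rangle : v \in \Lambda\}$ for generic normal directions $\xi$ (the paper's Lemma~\ref{action on Graff}). The density step you flag as the main obstacle is verified in the paper just as you anticipate: the exceptional directions lie in the countable union of the closed, nowhere dense sets $\{\xi \in S^{n-1} : \xi\cdot\mathbf{m} = 0\}$, $\mathbf{m}$ a nonzero integer vector, and for the remaining directions density follows from Dirichlet's approximation theorem (or Minkowski's first theorem in the geometry of numbers).
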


Theorem~\ref{Pogorelov-theorem} establishes a bijection between a class of Finsler spaces and a class of measure spaces. A simple, albeit important remark is that this bijection is equivariant with respect to the natural actions of the group of invertible affine transformations of $\mathbb{R}^n$, $\Aff(\mathbb{R}^n)$.

The left action $(T,\mathbf{x}) \mapsto T(\mathbf{x})$ of $\Aff(\mathbb{R}^n)$ on $\mathbb{R}^n$ induces the right action $(T,F) \mapsto T^*F$ on the space of reversible projective metrics. Likewise, the  left action $(T,\zeta) \mapsto T(\zeta)$ of the affine group on the affine Grassmannian of hyperplanes in
$\mathbb{R}^n$ induces the right action $(T,\mu) \mapsto T^{-1}_\# \! \mu$ on the space of smooth, positive measures on $\Graff_{n-1}(\mathbb{R}^n)$. Here $ T^{-1}_\# \! \mu$ is the push-forward of the measure $\mu$ by the transformation 
$$
T^{-1} : \Graff_{n-1}(\mathbb{R}^n) \longrightarrow \Graff_{n-1}(\mathbb{R}^n). 
$$

\begin{lemma}\label{equivariance}
In the bijection established by Theorem~\ref{Pogorelov-theorem}, if the Finsler metric $F$ on $\mathbb{R}^n$ corresponds to the quasipositive measure $\mu$ on $\Graff_{n-1}(\mathbb{R}^n)$, then the metric $T^*F$ corresponds to the measure $T^{-1}_\# \! \mu$ for any given invertible affine transformation
$T \in \Aff(\mathbb{R}^n)$, and vice-versa.
\end{lemma}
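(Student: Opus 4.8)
The plan is to prove the equivalence at the level of distance functions and then appeal to the uniqueness asserted in Theorem~\ref{Pogorelov-theorem}. First one checks that the two operations keep us inside the classes matched by the bijection. The metric $T^*F$ is again a smooth, reversible, projective Finsler metric: affine maps carry straight lines to straight lines, so projectivity is preserved, and $(T^*F)(-\mathbf{v}) = F(dT(-\mathbf{v})) = F(-dT\mathbf{v}) = F(dT\mathbf{v}) = (T^*F)(\mathbf{v})$ gives reversibility. Likewise $T^{-1}_\#\mu$ is again a smooth quasipositive measure, since conditions~(1) and~(2) are invariant under the affine action, which sends points to points and segments to segments. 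Granting this, it suffices to show that the distance obtained from $T^{-1}_\#\mu$ through Busemann's construction equals the distance function of $T^*F$; the uniqueness in Theorem~\ref{Pogorelov-theorem} then forces these two objects to correspond.

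Next I would compute each distance explicitly. Because $T$ is affine, for any smooth curve $\gamma$ the pointwise identity $(T^*F)(\dot\gamma(t)) = F\bigl(dT\,\dot\gamma(t)\bigr)$ integrates to $\Length_{T^*F}(\gamma) = \Length_F(T\circ\gamma)$. Since $T$ maps the segment $\mathbf{xy}$ onto the segment joining $T\mathbf{x}$ and $T\mathbf{y}$, and these segments are geodesics for $T^*F$ and $F$ respectively, taking infima yields
$$
d_{T^*F}(\mathbf{x},\mathbf{y}) = d_F(T\mathbf{x}, T\mathbf{y}).
$$
For the measure side, write $H(\mathbf{x},\mathbf{y})$ for the set of affine hyperplanes meeting the segment $\mathbf{xy}$, so that $d_{T^{-1}_\#\mu}(\mathbf{x},\mathbf{y}) = \bigl(T^{-1}_\#\mu\bigr)\bigl(H(\mathbf{x},\mathbf{y})\bigr)$ by definition of Busemann's construction. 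Unwinding the definition of the push-forward along the map $T^{-1} : \Graff_{n-1}(\mathbb{R}^n) \to \Graff_{n-1}(\mathbb{R}^n)$ gives $\bigl(T^{-1}_\#\mu\bigr)(A) = \mu\bigl(T(A)\bigr)$ for every Borel set $A$ of hyperplanes. Because $T$ preserves incidence, a hyperplane $\zeta$ meets $\mathbf{xy}$ if and only if $T(\zeta)$ meets the segment joining $T\mathbf{x}$ and $T\mathbf{y}$, whence $T\bigl(H(\mathbf{x},\mathbf{y})\bigr) = H(T\mathbf{x}, T\mathbf{y})$ and therefore
$$
d_{T^{-1}_\#\mu}(\mathbf{x},\mathbf{y}) = \mu\bigl(H(T\mathbf{x}, T\mathbf{y})\bigr) = d_\mu(T\mathbf{x}, T\mathbf{y}).
$$

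To finish, I would use the hypothesis that $F$ corresponds to $\mu$, i.e.\ $d_F = d_\mu$. Combining the two displayed computations, both $d_{T^*F}$ and $d_{T^{-1}_\#\mu}$ equal the function $(\mathbf{x},\mathbf{y}) \mapsto d_F(T\mathbf{x}, T\mathbf{y})$, so $T^*F$ and $T^{-1}_\#\mu$ have the same associated distance; by the uniqueness part of Theorem~\ref{Pogorelov-theorem} they correspond under the bijection. The converse statement follows by applying the same argument to $T^{-1}$. I do not expect any serious analytic difficulty here: the only delicate point is the bookkeeping that makes the push-forward along $T^{-1}$ (rather than along $T$) the correct inherited action, which is precisely what is needed so that $\bigl(T^{-1}_\#\mu\bigr)\bigl(H(\mathbf{x},\mathbf{y})\bigr)$ reduces to $\mu\bigl(H(T\mathbf{x},T\mathbf{y})\bigr)$ and so that both induced actions are genuinely right actions that are intertwined by the correspondence.
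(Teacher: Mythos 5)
Your proposal is correct and follows essentially the same route as the paper's own (much terser) proof, whose entire content is the identity $d_{T^{-1}_\# \mu}(\mathbf{x},\mathbf{y}) = \mu\bigl(H(T\mathbf{x},T\mathbf{y})\bigr) = d_{T^*F}(\mathbf{x},\mathbf{y})$ followed by the correspondence between distances and Finsler metrics. You simply make explicit the bookkeeping the paper leaves implicit: the push-forward convention, the preservation of incidence and of the classes matched by the bijection, and the appeal to the uniqueness clause of Theorem~\ref{Pogorelov-theorem}.
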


\begin{proof}
It suffices to note that the $\mu$-measure of all hyperplanes intersecting the segment $T(\mathbf{xy})$
is the same as the $T^{-1}_\# \! \mu$-measure of all hyperplanes intersecting the segment $\mathbf{xy}$. In other words, $T^* d(\mathbf{x}, \mathbf{y}) = d_{T^{-1}_\# \! \mu} (\mathbf{x}, \mathbf{y})$. The lemma follows at once from the relation between the distance and the Finsler metric. 
\end{proof}

Because of this result, if a reversible, projective Finsler metric is invariant under the action of a Euclidean crystallographic group $\Gamma$, then so is the corresponding measure. However, while the action of $\Gamma$ on $\mathbb{R}^n$ is properly discontinuous, its induced action on the affine Grassmannian $\Graff_{n-1}(\mathbb{R}^n)$ is a very different kind of animal. 

\begin{lemma}\label{action on Graff}
If a continuous function $f : \Graff_{n-1}(\mathbb{R}^n) \rightarrow \R$ is invariant under the translation of hyperplanes by integer vectors, then it is invariant under all translations. As a consequence, any smooth measure on $ \Graff_{n-1}(\mathbb{R}^n)$ that is invariant under translation by integer vectors is also invariant under all translations. 
\end{lemma}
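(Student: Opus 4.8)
The plan is to introduce explicit coordinates on the affine Grassmannian and reduce the statement to a one-dimensional density argument of Kronecker type. Passing to the double cover $S^{n-1}\times\R$, I write an affine hyperplane as $\{\mathbf{x} : \langle \mathbf{x},\mathbf{u}\rangle = t\}$, so that $(\mathbf{u},t)$ and $(-\mathbf{u},-t)$ represent the same element of $\Graff_{n-1}(\R^n)$. A direct computation shows that translation by a vector $\mathbf{v}$ acts by $(\mathbf{u},t)\mapsto(\mathbf{u}, t+\langle\mathbf{v},\mathbf{u}\rangle)$. Thus a continuous function $f$ on $\Graff_{n-1}(\R^n)$ lifts to a continuous function $f(\mathbf{u},t)$ on $S^{n-1}\times\R$, and invariance under the translation by $\mathbf{m}\in\Z^n$ is exactly the relation $f(\mathbf{u}, t+\langle\mathbf{m},\mathbf{u}\rangle)=f(\mathbf{u},t)$.

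The key step is the observation that for a direction $\mathbf{u}$ whose coordinates are linearly independent over $\mathbb{Q}$ (here we use $n\geq 2$, so that such directions exist and form a dense subset $D\subset S^{n-1}$), the set $\{\langle\mathbf{m},\mathbf{u}\rangle : \mathbf{m}\in\Z^n\}$ is the subgroup of $\R$ generated by the coordinates of $\mathbf{u}$, which by Kronecker's theorem is dense in $\R$. Invariance under integer translations together with continuity in $t$ then forces $f(\mathbf{u},\cdot)$ to be constant. In particular, for every such $\mathbf{u}$, every $\mathbf{v}\in\R^n$, and every $t$ we have $f(\mathbf{u}, t+\langle\mathbf{v},\mathbf{u}\rangle)=f(\mathbf{u},t)$; that is, $f$ is already invariant under all translations along the directions in $D$.

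To remove the restriction on the direction, I fix $\mathbf{v}\in\R^n$ and introduce the continuous function $g(\mathbf{u},t):=f(\mathbf{u}, t+\langle\mathbf{v},\mathbf{u}\rangle)-f(\mathbf{u},t)$. The previous step gives $g\equiv 0$ on $D\times\R$, and since $D$ is dense in $S^{n-1}$ and $g$ is continuous, $g$ vanishes identically. As $\mathbf{v}$ was arbitrary, $f$ is invariant under every translation, proving the first assertion.

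For the statement about measures, note that in these coordinates the product measure $d\mathbf{u}\,dt$, where $d\mathbf{u}$ is the rotation-invariant measure on $S^{n-1}$, is invariant under all translations, because each translation acts for fixed $\mathbf{u}$ as the Lebesgue-preserving shift $t\mapsto t+\langle\mathbf{v},\mathbf{u}\rangle$. Writing a smooth measure as $\mu=\rho\,d\mathbf{u}\,dt$ with $\rho$ a smooth (hence continuous) density, the translation-invariance of $\mu$ is equivalent to that of $\rho$, and the first part of the lemma applied to $\rho$ finishes the proof. The only genuinely delicate point is the density argument, but this is a standard consequence of Kronecker's theorem combined with the density in $S^{n-1}$ of directions with rationally independent coordinates.
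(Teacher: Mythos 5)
Your proposal is correct and takes essentially the same route as the paper's proof: pass to the double cover $S^{n-1}\times\R$, observe that translation by $\mathbf{v}$ acts as $(\mathbf{u},t)\mapsto(\mathbf{u},t+\langle\mathbf{v},\mathbf{u}\rangle)$, show that for the dense (indeed residual) set of directions $\mathbf{u}$ satisfying $\mathbf{u}\cdot\mathbf{m}\neq 0$ for all nonzero $\mathbf{m}\in\Z^n$ --- which is exactly your rational-independence condition --- the orbit $\{\langle\mathbf{m},\mathbf{u}\rangle : \mathbf{m}\in\Z^n\}$ is dense in $\R$, and conclude by continuity. The only cosmetic difference is that you obtain the one-dimensional density from the dichotomy for subgroups of $\R$ (Kronecker), whereas the paper produces arbitrarily small nonzero values $\mathbf{u}\cdot\mathbf{n}$ via Dirichlet's approximation theorem; your explicit treatment of the final density step (the auxiliary function $g$) and of the measure statement (factoring $\mu=\rho\,d\mathbf{u}\,dt$ against the translation-invariant reference measure) merely spells out what the paper leaves implicit.
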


\begin{proof}
The result will follow immediately once we establish that for a dense subset of $\Graff_{n-1}(\mathbb{R}^n)$ the $\mathbb{Z}^n$-orbit of any hyperplane $Y$ in the subset is dense in the $\mathbb{R}^n$-orbit of $Y$.

In order to best express the action of $\mathbb{R}^n$ (and $\mathbb{Z}^n$) on the Grassmannian of affine hyperplanes, we pass to its double cover, the Grassmannian of cooriented affine hyperplanes, which we identify with the product 
$S^{n-1} \times \mathbb{R}$ by associating the hyperplane with equation 
$\mathbf{u \cdot x} = p$ to the pair $(\mathbf{u}, p) \in S^{n-1} \times \mathbb{R}$.

The action of $\mathbb{R}^n$ on the Grassmannian of cooriented affine hyperplanes is
given by 
$$
\left( \mathbf{x}, (\mathbf{u}, p) \right) \longmapsto 
(\mathbf{u}, p + \mathbf{u \cdot x}) .
$$

{\sl The proof of the lemma is thus reduced to showing that for a dense set of points in the 
unit sphere $S^{n-1}$ the set $\{\mathbf{u \cdot m} : \mathbf{m} \in \mathbb{Z}^n \}$ is dense in $\mathbb{R}$.} In fact, this last set is dense when $\mathbf{u}$ is a unit vector such that $\mathbf{u \cdot m} \neq 0$ for all nonzero integer vectors.

Indeed, given a vector  $\mathbf{x}$ orthogonal to $\mathbf{u}$ and a positive number $\epsilon$, either  Dirichlet's approximation theorem or Minkowski's first theorem in the geometry of numbers implies that some
multiple of $\mathbf{x}$, let's say $\lambda \mathbf{x}$, is $\epsilon$-close to a nonzero integer vector
$\mathbf{n}$. It follows that
$$
0 < |\mathbf{u \cdot n}| = |\mathbf{u \cdot (n - \lambda x + \lambda x)}| = 
|\mathbf{u \cdot (n - \lambda x)}| \leq \epsilon . 
$$
This implies that the set $$
\{\mathbf{u} \cdot k\mathbf{n} : k \in \mathbb{Z} \} \subset \{\mathbf{u \cdot m} : \mathbf{m} \in \mathbb{Z}^n \}
$$
is an $\epsilon$-net of $\mathbb{R}$ and, since $\epsilon$ was arbitrary, that 
$\{\mathbf{u \cdot m} : \mathbf{m} \in \mathbb{Z}^n \}$ is dense.

Now, all we have to show is that the set of unit vectors $\mathbf{u}$ such that 
$\mathbf{u \cdot m} \neq 0$ for all nonzero integer vectors is dense on the sphere. In fact,
it is a residual set: its complement is the (countable) union of the closed, nowhere dense sets
$$
\{\mathbf{v} \in S^{n-1} : \mathbf{v \cdot m} = 0 \}
$$
as $\mathbf{m}$ ranges over all nonzero integer vectors. 
\end{proof}

\begin{proof}[Proof of Theorem~\ref{weak-metric-theorem}]
By Biebarbach's first theorem, any $n$-dimensional crystallographic group $\Gamma$ contains $n$ linearly independent translations. Up to conjugation by an affine transformation, we may assume that $\Gamma$ contains $\mathbb{Z}^n$.

Pogorelov's Theorem and Lemma~\ref{equivariance} tell us that if $F$ is a smooth, reversible projective Finsler metric on $\mathbb{R}^n$ that is invariant under $\mathbb{Z}^n$, then its associated smooth quasipositive measure $\mu$ on $\Graff_{n-1}(\mathbb{R}^n)$ is also invariant under $\mathbb{Z}^n$. Lemma~\ref{action on Graff} implies that $\mu$ is invariant under all translations and, using Lemma~\ref{equivariance} again, we conclude that so is $F$. This means that
$F(\mathbf{x}, \mathbf{v}) = \| \mathbf{v} \|$ for some Minkowski norm, and that the induced distance between any two points $\mathbf{x}$ and $\mathbf{y}$ equals $\| \mathbf{y - x} \|$.
\end{proof}

\section{Two-dimensional case} \label{2D-case-section}

Unfortunately, the simple proof of Theorem~\ref{weak-metric-theorem} cannot be extended to a proof of Theorem~\ref{metric-version}. Not only does the integral-geometric approach fail to yield non-reversible projective Finsler metrics, but for $n > 2$ continuous quasipositive measures on the affine Grassmannian $\Graff_{n-1}(\mathbb{R}^n)$ are not enough to yield all $C^2$ reversible projective Finsler metrics on $\mathbb{R}^n$. This last point is explained in great detail by Szab\'o in~\cite{Szabo:1986}. 

In this section we switch to an approach that relies on very simple ideas in variational calculus and symplectic geometry, and which applies equally well to irreversible projective Finsler metrics. In fact, we only need three basic facts: (1) a projective Finsler metric satisfies a specific system of linear partial differential equations; (2) a projective Finsler metric induces a symplectic form on the space of oriented lines which characterizes the metric up to the addition of a differential, and (3) most aspects of Hilbert's fourth problem are two-dimensional in nature because a projective Finsler metric induces projective Finsler metrics on all its $2$-dimensional affine subspaces. 

\subsection{Hamel's equations and the Hilbert forms}
One of the key elements in the proof of Pogorelov's Theorem (Theorem~\ref{Pogorelov-theorem}) is Hamel's 
result~(\cite{Hamel:1903}) that projective Finsler metrics satisfy a system of partial differential equations. Since, by linearity, the equations hold for differences of projective Finsler metrics---which may no longer be Finsler metrics---it is useful to consider a more general setting. 

\begin{definition}
A differential $1$-density (or a $1$-density for short) of class $C^k$ $(k \geq 0)$ on a manifold $M$ is
a continuous function $L : TM \rightarrow \mathbb{R}$ that is homogeneous of degree one (i.e., 
$L(\lambda \mathbf{v}_\mathbf{x}) = \lambda L(\mathbf{v}_\mathbf{x})$ for all $\lambda \geq 0$) and $C^k$ outside the zero section. 
\end{definition}

Like Finsler metrics, $1$-densities can be integrated along oriented piecewise $C^1$ curves independently of their  parametrization, and if a $1$-density $L$ is $C^2$ it makes sense to say it is {\sl projective} when all parametrized lines $t \mapsto \mathbf{x} + t\mathbf{v}$ solve the Euler-Lagrange equations 
$$
\frac{d}{dt} \frac{\partial L}{\partial \mathbf{v}} - \frac{\partial L}{\partial \mathbf{x}} = 0 .
$$
This condition leads directly to the following

\begin{theorem}[Hamel~\cite{Hamel:1903}]
A $C^2$ $1$-density $L : T \mathbb{R}^n \longrightarrow [0,\infty)$ is projective if and only if outside the zero section it satisfies the system of partial differential equations
$$
\frac{\partial^2 L}{\partial x_i \partial v_j} = \frac{\partial^2 L}{\partial x_j \partial v_i}
$$
for all $1 \leq i, j, \leq n$. 
\end{theorem}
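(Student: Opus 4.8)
The plan is to reduce the statement to a pointwise identity in the velocity variable and then to dispose of that identity by a short computation with the Euler (radial) vector field. First I would write out the Euler--Lagrange operator along a straight line with constant velocity. If $\gamma(t) = \mathbf{x} + t\mathbf{v}$, then
\[
\frac{d}{dt}\frac{\partial L}{\partial v_i} - \frac{\partial L}{\partial x_i}
= \sum_j v_j \frac{\partial^2 L}{\partial x_j \partial v_i}(\mathbf{x}+t\mathbf{v},\mathbf{v})
- \frac{\partial L}{\partial x_i}(\mathbf{x}+t\mathbf{v},\mathbf{v}).
\]
Since $\mathbf{x}+t\mathbf{v}$ ranges over all of $\mathbb{R}^n$ as $\mathbf{x}$ and $t$ vary, the $1$-density $L$ is projective if and only if the identity $\sum_j v_j\,\partial^2 L/\partial x_j \partial v_i = \partial L/\partial x_i$ holds at every $(\mathbf{x},\mathbf{v})$ with $\mathbf{v}\neq\mathbf{0}$.

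Next I would bring in the homogeneity of $L$. Euler's relation $\sum_j v_j\,\partial L/\partial v_j = L$, differentiated with respect to $x_i$, gives
\[
\sum_j v_j \frac{\partial^2 L}{\partial x_i \partial v_j} = \frac{\partial L}{\partial x_i},
\]
and this holds for \emph{every} $C^2$ $1$-density, projective or not. Comparing it with the projectivity identity above shows that $L$ is projective if and only if $\sum_j v_j A_{ij} = 0$ for all $i$, where $A_{ij} := \partial^2 L/\partial x_j \partial v_i - \partial^2 L/\partial x_i \partial v_j$ is antisymmetric in $i,j$. The condition in the statement is exactly $A_{ij}\equiv 0$, so one implication is immediate: symmetry of the mixed second derivatives forces $\sum_j v_j A_{ij} = 0$, hence projectivity.

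For the converse I must show that $\sum_j v_j A_{ij}(\mathbf{x},\mathbf{v}) = 0$ for all $\mathbf{v}$ forces $A_{ij}\equiv 0$, and here I expect the main obstacle. A naive pointwise argument fails in dimension $\geq 3$: an antisymmetric matrix $A$ with $A\mathbf{v}=0$ for a single $\mathbf{v}$ need not vanish (take $A\mathbf{v}=\mathbf{w}\times\mathbf{v}$ with $\mathbf{w}\parallel\mathbf{v}$ in $\mathbb{R}^3$). The missing ingredient is homogeneity. Freezing $\mathbf{x}$ and working on the velocity space, I set $\omega := \sum_i (\partial L/\partial x_i)\,dv_i$, a $1$-form whose coefficients are homogeneous of degree one in $\mathbf{v}$. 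Its exterior derivative $\eta := d\omega = \tfrac12\sum_{i,j} A_{ij}\, dv_i\wedge dv_j$ is closed, and contraction with the radial field $E=\sum_j v_j\,\partial/\partial v_j$ yields $\iota_E \eta = \sum_j (\sum_i A_{ij} v_i)\,dv_j$, so the projectivity condition is precisely $\iota_E\eta = 0$.

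Finally I would combine two facts about $\eta$. Because $\mathcal{L}_E dv_i = dv_i$ and the coefficients $\partial L/\partial x_i$ are homogeneous of degree one (so $E(\partial L/\partial x_i) = \partial L/\partial x_i$), one gets $\mathcal{L}_E\omega = 2\omega$, and hence $\mathcal{L}_E\eta = d\,\mathcal{L}_E\omega = 2\eta$. On the other hand, Cartan's formula together with $d\eta = 0$ and $\iota_E\eta=0$ gives $\mathcal{L}_E\eta = \iota_E d\eta + d\,\iota_E\eta = 0$. Comparing the two expressions yields $2\eta = 0$, so $\eta = 0$ and $A_{ij}\equiv 0$. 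This step is what replaces the failed linear-algebra argument: $\eta$ is an eigenform of $\mathcal{L}_E$ with nonzero eigenvalue, so the vanishing of its interior product with $E$ upgrades to the vanishing of $\eta$ itself, and the argument is uniform in the dimension $n$.
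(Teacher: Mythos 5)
Your overall architecture is sound, and it is worth noting that the paper itself offers no proof of this statement: it cites Hamel and merely says the Euler--Lagrange condition ``leads directly'' to the equations, so your proposal supplies an argument the paper leaves implicit. The forward direction, the reduction via Euler's relation to the condition $\sum_j v_j A_{ij}=0$ with $A_{ij}$ antisymmetric, the counterexample showing that pointwise linear algebra fails in dimension at least three, and the identification of homogeneity plus exactness as the decisive structure are all correct. All of this is the standard route and it is carried out accurately, including the computations $\iota_E\eta=\sum_j\bigl(\sum_i A_{ij}v_i\bigr)dv_j$ and $\mathcal{L}_E\omega=2\omega$.

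There is, however, one step that fails at the stated regularity: in the final paragraph you write $d\eta=0$ and apply Cartan's formula to $\eta$. The coefficients $A_{ij}$ of $\eta$ are \emph{second} derivatives of $L$, hence only continuous under the hypothesis that $L$ is $C^2$ off the zero section; $d\eta$, and with it the term $\iota_E\,d\eta$ in Cartan's formula, is not classically defined. (Your other identity $\mathcal{L}_E\eta=2\eta$ does survive, because $A_{ij}$ is homogeneous of degree zero, so the radial derivatives that $\mathcal{L}_E$ requires exist and vanish---but $\iota_E\,d\eta$ needs all first derivatives of $A_{ij}$.) As written the proof requires $L\in C^3$, or a distributional gloss you have not supplied. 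The repair stays entirely inside your framework: apply Cartan's formula to the $C^1$ form $\omega$ instead of to $\eta$. Setting $G:=\iota_E\omega=\sum_j v_j\,\partial L/\partial x_j$, which is $C^1$ and homogeneous of degree two, your computation $\mathcal{L}_E\omega=2\omega$ together with the projectivity condition $\iota_E\eta=0$ gives
$$
2\omega \;=\; \mathcal{L}_E\omega \;=\; d\,\iota_E\omega+\iota_E\,d\omega \;=\; dG+\iota_E\eta \;=\; dG ,
$$
every term of which is classically meaningful. Hence $\omega=\tfrac12\,dG$; since $dG=2\omega$ has $C^1$ coefficients, $G$ is $C^2$ in $\mathbf{v}$, and therefore $\eta=d\omega=\tfrac12\,d(dG)=0$ by equality of mixed partials. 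In coordinates this is simply: projectivity says $\partial L/\partial x_i=\tfrac12\,\partial G/\partial v_i$, and Schwarz's theorem applied to $G$ yields $A_{ij}\equiv 0$. This three-line finish replaces your Lie-derivative argument on $\eta$ and is valid exactly at the $C^2$ regularity the theorem asserts---a point of some importance here, since the paper repeatedly stresses that regularity thresholds are where the content of these results lies.
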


Given a $C^2$ $1$-density $L : T\mathbb{R}^n \longrightarrow \mathbb{R}$, its {\sl Hilbert $1$-form} is 
defined as
$$
\alpha_{{}_L} := \sum_{i=1}^n \frac{\partial L}{\partial v_i} \, dx_i .
$$
This form is homogeneous of degree zero and hence it lives naturally on the bundle of tangent rays,
$\mathcal{S} T\mathbb{R}^n$, defined as the set of equivalence classes of nonzero tangent vectors up
to positive multiples. If $L$ is a Finsler metric, we can identify the bundle of tangent rays with 
the unit sphere bundle
$$
\mathcal{S}_{{}_L}  T\mathbb{R}^n = 
\{(\mathbf{x}, \mathbf{v}) \in T\mathbb{R}^n : L(\mathbf{x}, \mathbf{v}) = 1 \} .
$$
From the viewpoint of the Hilbert $1$-form, the basic difference between Finsler metrics and general $1$-densities is that in the former case $\alpha_{{}_L}$ is a contact $1$-form on the bundle of tangent rays. This is the same as saying that {\sl the Hilbert $2$-form} $\omega_{{}_L} := d\alpha_{{}_L}$ is a form of maximal rank in $\mathcal{S}T\mathbb{R}^n$. When this is so, the kernel of $\omega_{{}_L}$ is a line field and the geodesic 
vector field is the Reeb vector field $\mathcal{R}_{\alpha_{{}_L}}$ defined by the equations
$$
\alpha_{{}_L}(\mathcal{R}_{\alpha_{{}_L}}) = 1 \textrm{ and } 
\omega_{{}_L}(\mathcal{R}_{\alpha_{{}_L}}, \cdot) = 0 .
$$

Notice that in coordinates we have that 
$$
\omega_{{}_L} = \sum_{i,j=1}^n \frac{\partial^2 L}{\partial x_i \partial v_j} \, dx_i \wedge dx_j +
 \sum_{i,j=1}^n \frac{\partial^2 L}{\partial v_i \partial v_j} \, dv_i \wedge dx_j .
$$
Therefore, it follows from Hamel's equations that {\sl if the $1$-density $L$ is projective, then } 
$$
\omega_{{}_L} =  \sum_{i,j=1}^n \frac{\partial^2 L}{\partial v_i \partial v_j} \, dv_i \wedge dx_j .
$$
We wish to prove---and this is a re-statement of Theorem~\ref{result-for-densities}---that if $L$ is also periodic (i.e., invariant under translations by integer vectors), then the partial derivatives 
$$
\frac{\partial^2 L}{\partial v_i \partial v_j}(\mathbf{x},\mathbf{v}), \ \ 1 \leq i,j \leq n ,
$$
are independent of the $\mathbf{x}$ variable. We shall proceeed to do so in the case $n = 2$ and assuming that $L$ is a Finsler metric.

\subsection{Periodic projective Finsler metrics on the plane}

\begin{theorem}\label{2D-case}
A $C^2$ Finsler metric on the plane that is both projective and periodic is the sum of a Minkowski norm and the  differential of a $C^3$ function on the plane. 
\end{theorem}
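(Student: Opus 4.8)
The plan is to reduce \ref{2D-case} to showing that the velocity-Hessian $g_{ij}(\mathbf{x},\mathbf{v}) := \partial^2 L/\partial v_i\partial v_j$ of the Finsler metric $L$ does not depend on $\mathbf{x}$, and then to integrate. Indeed, if $g_{ij}$ is independent of $\mathbf{x}$, then $L(\mathbf{x},\mathbf{v}) - L(\mathbf{0},\mathbf{v})$ has vanishing $\mathbf{v}$-Hessian on $\mathbb{R}^2 \setminus \{\mathbf{0}\}$; being homogeneous of degree one in $\mathbf{v}$, it must be linear in $\mathbf{v}$, say $\sum_i a_i(\mathbf{x})\,v_i$. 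Setting $N := L(\mathbf{0},\cdot)$, which is a Minkowski norm since it is the restriction of $L$ to $T_{\mathbf{0}}\mathbb{R}^2$, we obtain $L(\mathbf{x},\mathbf{v}) = N(\mathbf{v}) + \sum_i a_i(\mathbf{x})\,v_i$. Hamel's equations applied to $L$ then read $\partial a_j/\partial x_i = \partial a_i/\partial x_j$, so the $1$-form $\sum_i a_i\,dx_i$ is closed; since $\mathbb{R}^2$ is simply connected it is the differential $df$ of a $C^3$ function $f$, and $L = N + df$, as desired.

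To prove that the Hessian is $\mathbf{x}$-independent I would pass to the bundle of tangent rays $\mathcal{S}T\mathbb{R}^2 \cong \mathbb{R}^2 \times S^1$ and analyze the Hilbert $2$-form. Writing $\mathbf{u}(\theta) = (\cos\theta,\sin\theta)$ and $\mathbf{u}^\perp(\theta) = (-\sin\theta,\cos\theta)$, restricting the expression $\omega_{{}_L} = \sum_{ij} g_{ij}\,dv_i \wedge dx_j$ (valid because $L$ is projective) to the section $\mathbf{v} = \mathbf{u}(\theta)$ and using Euler's relation $\sum_i g_{ij}\,u_i = 0$, a direct computation gives
\[
\omega_{{}_L} = \kappa(\mathbf{x},\theta)\, d\theta \wedge ds, \qquad s := \mathbf{x}\cdot\mathbf{u}^\perp(\theta),
\]
where $\kappa(\mathbf{x},\theta) := \sum_{ij} g_{ij}(\mathbf{x},\mathbf{u}(\theta))\,u_i^\perp(\theta)\,u_j^\perp(\theta)$ is the transverse second derivative, strictly positive by the strict convexity of the Minkowski norms. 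Here $(\theta,s)$ are precisely the standard coordinates — direction and signed offset — on the space $\mathcal{L} \cong S^1\times\mathbb{R}$ of oriented lines. Since $L$ is projective, $\omega_{{}_L} = d\alpha_{{}_L}$ is closed and annihilates the geodesic (Reeb) field, hence is invariant under the geodesic flow; therefore $\kappa$ is constant along each oriented line and descends to a continuous, strictly positive density $\rho(\theta,s)$ on $\mathcal{L}$.

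It remains to show that $\rho$ is independent of $s$. The $\mathbb{Z}^2$-periodicity of $L$ makes $\rho$ invariant under the induced action of $\mathbb{Z}^2$ on $\mathcal{L}$, which is the shear $(\theta,s) \mapsto (\theta,\, s + \mathbf{m}\cdot\mathbf{u}^\perp(\theta))$; as these shears preserve $d\theta \wedge ds$, invariance of the form means $\rho(\theta,\, s + \mathbf{m}\cdot\mathbf{u}^\perp(\theta)) = \rho(\theta,s)$ for all $\mathbf{m}\in\mathbb{Z}^2$. This is exactly the situation of Lemma~\ref{action on Graff}: for every $\theta$ such that $\mathbf{u}^\perp(\theta)$ is orthogonal to no nonzero integer vector — a residual set of directions — the numbers $\{\mathbf{m}\cdot\mathbf{u}^\perp(\theta) : \mathbf{m}\in\mathbb{Z}^2\}$ are dense in $\mathbb{R}$, and continuity of $\rho(\theta,\cdot)$ forces it to be constant. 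Since this holds on a dense set of $\theta$, continuity of $\rho$ gives that $\rho$ is independent of $s$ everywhere. Consequently $\kappa(\mathbf{x},\theta)$ depends only on $\theta$; and as the full Hessian $g(\mathbf{x},\mathbf{u}(\theta))$ has kernel $\mathbf{u}(\theta)$ and equals $\kappa$ on $\mathbf{u}^\perp(\theta)$, it too is $\mathbf{x}$-independent, hence so is $g_{ij}(\mathbf{x},\mathbf{v})$ for all $\mathbf{v}\neq\mathbf{0}$ by homogeneity, which feeds into the first paragraph.

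I expect the main obstacle to be the passage from $\mathbb{Z}^2$-invariance to full translation-invariance of $\rho$; this is precisely the Diophantine density phenomenon already isolated in Lemma~\ref{action on Graff}, so the real work lies in arranging the descent of $\omega_{{}_L}$ to the space of oriented lines in coordinates that make the lemma apply verbatim. It is worth stressing that the argument is genuinely two-dimensional: the space of oriented lines is a surface and the transverse Hessian reduces to the single scalar $\kappa$, so that $s$-independence of $\rho$ immediately pins down the entire Hessian — the higher-dimensional case instead requires the Minkowski--Randers analysis developed in the later sections.
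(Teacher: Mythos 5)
Your proof is correct and follows essentially the same route as the paper: you descend the Hilbert $2$-form $\omega_{{}_L}$ to the space of oriented lines, upgrade its $\mathbb{Z}^2$-invariance to full translation-invariance via the Diophantine density argument of Lemma~\ref{action on Graff}, and then identify $F - F(\mathbf{0},\cdot)$ as an exact $1$-form using Hamel's equations. The only difference is presentational: you carry out the descent explicitly in the $(\theta,s)$ coordinates and reduce the Hessian to the scalar $\kappa$, where the paper instead invokes the abstract existence of the quotient form $\widehat{\omega}_{{}_F}$ on $\Graff_1^+(\mathbb{R}^2)$ and applies the lemma to it directly.
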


\begin{proof}
Let us consider the correspondence
$$
\xymatrix{ 
      & \mathcal{S} T \mathbb{R}^2  \ar[dl]_{\pi_1}\ar[dr]^{\pi_2} &   \\
      \mathbb{R}^2  &     &  \Graff_1^+(\mathbb{R}^2) ,}
$$
where $\pi_1(\mathbf{x},\mathbf{v}) = \mathbf{x}$ and $\pi_2(\mathbf{x},\mathbf{v})$ is the oriented line passing
through $\mathbf{x}$ with direction vector $\mathbf{v}$. Note that the correspondence is equivariant with respect to the natural actions of the affine group  $\Aff(\mathbb{R}^2)$ on the three manifolds. 

If $F$ is a $C^2$ projective Finsler metric on the plane the tangent spaces to the fibers of the fibration 
$$
\pi_2 : \mathcal{S} T \mathbb{R}^2 \longrightarrow \Graff_1^+(\mathbb{R}^2)
$$ 
form the line field $\ker \omega_{{}_F}$. It follows that there exists a unique nowhere-zero $2$-form
${\widehat{\omega}}_{{}_F}$  on $\Graff_1^+(\mathbb{R}^2)$ so that 
$\pi_2^* {\widehat{\omega}}_{{}_F} = \omega_{{}_F}$. Morever, because of the definition of ${\widehat{\omega}}_{{}_F}$ and the equivariance of $\pi_2$, the form $ \omega_{{}_F}$ is invariant under  an affine transformation if and only if ${\widehat{\omega}}_{{}_F}$ is invariant. 

By hypothesis, the metric $F$ is invariant under translations by integer vectors, which immediately implies
that the continuous $2$-forms $\omega_{{}_F}$ and ${\widehat{\omega}}_{{}_F}$ are also $\mathbb{Z}^2$-invariant. By Lemma~\ref{action on Graff}, the area form ${\widehat{\omega}}_{{}_F}$ must be invariant under all translations, and therefore the same is true for $\omega_{{}_F}$.  In other words, the partial derivatives appearing in the expression
$$
\omega_{{}_F} =  \sum_{i,j=1}^2 \frac{\partial^2 F}{\partial v_i \partial v_j} \, dv_i \wedge dx_j 
$$
are independent of the $\mathbf{x}$ variable. 

We will conclude the proof by showing that this implies that $F$ is the sum of a translation-invariant Finsler metric and a closed $1$-form. In order to do this, define the translation-invariant Finsler metric 
$F_0(\mathbf{x},\mathbf{v}) := F(\mathbf{0}, \mathbf{v})$ and consider the $1$-density $L := F - F_0$. Because of
the translation-invariance of $\omega_{{}_F}$, we have that
$$
 \sum_{i,j=1}^2 \frac{\partial^2 L}{\partial v_i \partial v_j} \, dv_i \wedge dx_j  = 0 ,
$$
which tells us that $L$ is linear in the velocities. In other words, $L$ is a differential $1$-form of class
$C^2$. On the other hand, Hamel's equations,
$$
\sum_{i,j=1}^n \frac{\partial^2 L}{\partial x_i \partial v_j} \, dx_i \wedge dx_j = 0 ,
$$
tell us that it is a closed $1$-form. Thus $F$ is the sum of the Minkowski norm $F_0$ and the differential of a $C^3$ function on the plane.
\end{proof}

\section{Minkowski-Randers spaces} \label{Minowski-Randers-section}

In the previous section we have shown that if a $C^2$ projective Finsler metric on the plane is invariant under translations by a lattice, then it is the sum of a translation-invariant metric and a closed $1$-form. The next step towards the proof of Theorems~\ref{metric-version} and~\ref{differential-geometric-version} is to show that if a Finsler metric on $\mathbb{R}^n$ induces this kind of metric on a dense 
 subset of 2-flats, then the metric on $\mathbb{R}^n$ is itself the sum of a translation-invariant metric and a closed $1$-form. In this section, we are able to do this thanks to an interesting result by Groemer (\cite{Groemer:1997}) in convex geometry.

\begin{theorem}[Groemer]\label{Groemer}
Let $(X, \langle \cdot , \cdot \rangle)$ be finite-dimensional Euclidean space of dimension greater than two, let $\mathcal{P}$ and $\mathcal{Q}$ be two convex bodies in $X$ one of which is strictly convex, and let $W \subset X$ be a $1$-dimensional subspace. If every orthogonal projection of  $\mathcal{P}$ onto a hyperplane containing $W$ is a translate of the corresponding orthogonal projection of $\mathcal{Q}$, then  $\mathcal{P}$ is a translate of $\mathcal{Q}$. 
\end{theorem}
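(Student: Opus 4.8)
The plan is to translate everything into the language of support functions and then reduce the statement to a rigidity property of a single positively homogeneous function. Recall that the support function $h_{\mathcal{K}}(u)=\sup_{x\in\mathcal{K}}\langle x,u\rangle$ of a convex body is positively homogeneous of degree one, that two bodies are translates of one another if and only if the difference of their support functions is linear, and that the support function of the orthogonal projection of $\mathcal{K}$ onto a subspace $S$ is exactly the restriction $h_{\mathcal{K}}|_S$. Setting $g:=h_{\mathcal{P}}-h_{\mathcal{Q}}$, the hypothesis says precisely that $g$ restricts to a linear function on every hyperplane $H\supset W$, and the conclusion to be reached is that $g$ is linear on all of $X$.

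The first task is to extract the purely algebraic consequences of this restriction hypothesis. Choose coordinates $X=W^\perp\oplus W$ and write a point as $(y,t)$, with $y\in W^\perp$ and $t\in\R$ the coordinate along a fixed unit vector $e$ spanning $W$. Since $e$ lies in every $H\supset W$ and $g|_H$ is linear, the coefficient of $t$ in $g|_H$ equals $g(e)=:b$ for all such $H$; moreover $g|_H$ is odd, and because $\dim W^\perp=n-1\ge 2$ every vector of $X$ lies in some $H\supset W$, so $g$ is odd and hence homogeneous of degree one for all real scalars. A short verification then gives $g(y,t)=\phi(y)+bt$, where $\phi(y):=g(y,0)$ is an odd, positively homogeneous function on $W^\perp$; this is the one place the hypothesis $\dim X>2$ is essential, as it guarantees that for each $y$ there is a hyperplane through $W$ containing $(y,t)$. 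Translating $\mathcal{Q}$ by $be$ (which affects neither its strict convexity nor the truth of the conclusion) I may assume $b=0$, so that $h_{\mathcal{P}}(y,t)=h_{\mathcal{Q}}(y,t)+\phi(y)$ with $\phi$ independent of $t$.

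The heart of the matter, and the step I expect to be the main obstacle, is to upgrade convexity of $h_{\mathcal{P}}$ and $h_{\mathcal{Q}}$ to convexity of $\phi$; this is exactly where strict convexity must be used. Suppose $\mathcal{Q}$ is the strictly convex body and put $M=\max\{\langle q,e\rangle:q\in\mathcal{Q}\}$. Because $\mathcal{Q}$ is strictly convex its face in the direction $e$ is a single point $q^{+}$, so the functions $r_T(y):=h_{\mathcal{Q}}(y,T)-MT$ decrease, as $T\to+\infty$, to the \emph{linear} function $y\mapsto\langle q^{+},y\rangle$. Consequently the convexity gap $\Delta_{\mathcal{Q}}(T):=\tfrac12\bigl(h_{\mathcal{Q}}(y_0,T)+h_{\mathcal{Q}}(y_1,T)\bigr)-h_{\mathcal{Q}}\bigl(\tfrac{y_0+y_1}{2},T\bigr)$ (in which the $MT$ terms cancel) tends to the convexity gap of a linear function, namely $0$. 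Writing the convexity inequality for $h_{\mathcal{P}}$ at the three points $(y_0,T),(y_1,T),(\tfrac{y_0+y_1}{2},T)$, substituting $h_{\mathcal{P}}=h_{\mathcal{Q}}+\phi$, cancelling, and letting $T\to\infty$ yields midpoint convexity of $\phi$, hence (by continuity) convexity of $\phi$. The delicate point here is that only \emph{one} body is assumed strictly convex; if instead $\mathcal{P}$ is the strictly convex one, the identical argument applied to $-\phi=h_{\mathcal{Q}}-h_{\mathcal{P}}$ shows that $-\phi$ is convex, i.e.\ $\phi$ is concave. In either case one of $\phi,-\phi$ is a convex, odd, positively homogeneous function.

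It remains to observe that a convex, odd, positively homogeneous function of degree one is automatically linear. Indeed, convexity together with homogeneity gives subadditivity, $\psi(a+b)=2\psi\bigl(\tfrac{a+b}{2}\bigr)\le\psi(a)+\psi(b)$, while oddness yields the reverse inequality $\psi(a+b)\ge\psi(a)+\psi(b)$ upon applying subadditivity to $a=(a+b)+(-b)$; hence $\psi$ is additive, and a continuous homogeneous additive function is linear. Applying this to whichever of $\phi,-\phi$ came out convex shows $\phi$ is linear, so $g$ is linear on $X$ and $\mathcal{P}$ is a translate of $\mathcal{Q}$, as desired. The only real work, and the portion most in need of careful justification rather than formula-pushing, is the limiting convexity-gap argument of the third paragraph and its correct interaction with the one-sided strict convexity assumption.
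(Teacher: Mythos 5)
The paper does not actually prove this statement: Theorem~\ref{Groemer} is imported verbatim from \cite{Groemer:1997} and used as a black box, the authors' own contribution being the affinely invariant reformulation (Theorem~\ref{invariant-Groemer}) obtained by dualization. So there is no internal proof to compare yours against; judged on its own merits, your argument is correct and complete, and it has the virtue of making this ingredient self-contained. The reduction is sound: since $h_{\pi_H\mathcal{K}}=h_{\mathcal{K}}|_H$, the hypothesis says exactly that $g=h_{\mathcal{P}}-h_{\mathcal{Q}}$ is linear on each hyperplane $H\supset W$, with the linear functional (the translating vector $v_H$) allowed to vary with $H$ --- and your argument correctly uses nothing more than this. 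The splitting $g(y,t)=\phi(y)+bt$ is valid because any point of $X$ lies on a hyperplane through $W$, which is indeed the one genuine use of $\dim X>2$; the coefficient $b=g(e)$ is automatically independent of $H$ because it is a value of the globally defined function $g$.

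The two delicate steps both check out. First, in the limit $r_T(y)=h_{\mathcal{Q}}(y+Te)-MT\downarrow\langle q^{+},y\rangle$: monotonicity is clear since each competitor $\langle q,y\rangle+T\left(\langle q,e\rangle-M\right)$ is nonincreasing in $T$, and the identification of the limit follows from the compactness argument you allude to (near-maximizers $q_T$ must satisfy $\langle q_T,e\rangle\to M$, and strict convexity forces the face $\{q\in\mathcal{Q}:\langle q,e\rangle=M\}$ to be the singleton $\{q^{+}\}$, so $q_T\to q^{+}$). Pointwise convergence at the three fixed points $y_0$, $y_1$, $\tfrac{y_0+y_1}{2}$ is all the convexity-gap computation needs, so no uniformity issue arises. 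Second, the one-sided strict convexity is handled correctly: if $\mathcal{P}$ rather than $\mathcal{Q}$ is strictly convex, the same computation gives concavity of $\phi$, and your closing lemma --- an odd, continuous, positively homogeneous function that is convex or concave is additive, hence linear --- disposes of either case; the continuity of $\phi$ needed to pass from midpoint convexity to convexity is automatic, $\phi(y)=g(y,0)$ being a difference of support functions. Two small remarks: it is worth writing out the compactness step above explicitly rather than gesturing at it, and your proof in fact uses strict convexity only through the singleton face in the single direction $e$ spanning $W$, so it establishes a slightly stronger statement than the one quoted.
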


In fact, we shall derive from Groemer's theorem a result in Finsler geometry that is of independent interest. 

\begin{definition}
A {\sl Minkowski-Randers space} is an affine space provided with a $C^2$ Finsler metric that is the sum of a translation-invariant metric and a $1$-form. 
\end{definition}

\begin{theorem}\label{Minkowski-Randers-characterization}
An $n$-dimensional affine space $V$ provided with a Finsler metric is a Minkowski-Randers space if and only if every affine hyperplane in $V$ provided with the induced metric is itself a Minkowski-Randers space.
\end{theorem}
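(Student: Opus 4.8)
The plan is to translate the statement into convex geometry via duality and then invoke Groemer's theorem (Theorem~\ref{Groemer}). For a Minkowski norm $N$ on a vector space $W$, let $B_N^\ast \subset W^\ast$ denote its \emph{dual unit ball}, so that $N(\mathbf{v}) = \sup_{\xi \in B_N^\ast} \xi(\mathbf{v})$ is the support function of $B_N^\ast$; by the last axiom in the definition of a Minkowski norm, $B_N^\ast$ is strictly convex. Two elementary observations drive the argument. First, adding a linear functional $\beta \in W^\ast$ to $N$ merely translates the dual unit ball: $B_{N+\beta}^\ast = B_N^\ast + \beta$. Second, if $W \subset V$ is a linear subspace, then the dual unit ball of the restriction $N|_W$ is the image of $B_N^\ast$ under the restriction map $V^\ast \to W^\ast$---equivalently, after fixing a Euclidean identification $V \cong V^\ast$, the orthogonal projection of $B_N^\ast$ onto $W$. (The nontrivial inclusion here is a dominated Hahn--Banach extension.) For the Finsler metric $F$ on $V$, write $B_{\mathbf{x}}^\ast \subset V^\ast$ for the dual unit ball of the Minkowski norm $F(\mathbf{x},\cdot)$.

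With this dictionary the forward implication is immediate: if $F(\mathbf{x},\mathbf{v}) = \|\mathbf{v}\| + \beta_{\mathbf{x}}(\mathbf{v})$, restricting to an affine hyperplane $H = \mathbf{p} + W$ gives $F|_H(\mathbf{x},\mathbf{v}) = \|\mathbf{v}\|_W + \beta_{\mathbf{x}}|_W(\mathbf{v})$ for $\mathbf{x}\in H$ and $\mathbf{v}\in W$, which is a translation-invariant norm plus a $1$-form on $H$.

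For the substantive direction I would fix two points $\mathbf{x},\mathbf{y}\in V$ and aim to show that $B_{\mathbf{x}}^\ast$ and $B_{\mathbf{y}}^\ast$ are translates of one another; granting this for every pair, one fixes a base point $\mathbf{0}$, sets $\|\cdot\| := F(\mathbf{0},\cdot)$, and defines $\beta_{\mathbf{x}} := F(\mathbf{x},\cdot) - F(\mathbf{0},\cdot)$, which is linear in $\mathbf{v}$ (the difference of the support functions of a body and its translate is the translating functional) and $C^2$ in $\mathbf{x}$ because $F$ is, yielding the desired $1$-form. To compare $B_{\mathbf{x}}^\ast$ and $B_{\mathbf{y}}^\ast$, let $W_0 = \mathrm{span}(\mathbf{y}-\mathbf{x})$. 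Every linear hyperplane $U \supset W_0$ is the direction of the affine hyperplane $H = \mathbf{x} + U$, which contains both $\mathbf{x}$ and $\mathbf{y}$. By hypothesis $H$ is a Minkowski-Randers space, so the dual unit balls of its induced metric at $\mathbf{x}$ and $\mathbf{y}$ are translates; by the second observation these are exactly the orthogonal projections of $B_{\mathbf{x}}^\ast$ and $B_{\mathbf{y}}^\ast$ onto $U$. Thus the projections onto every hyperplane containing the line $W_0$ are translates of one another, and since both bodies are strictly convex, Groemer's theorem (applicable because $n \geq 3$) yields that $B_{\mathbf{x}}^\ast$ is a translate of $B_{\mathbf{y}}^\ast$.

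The main obstacle is setting up the duality dictionary correctly---in particular recognizing that the Randers modification ``norm $+$ $1$-form'' is a \emph{translation of the dual body}, not of the indicatrix, and that hyperplane restriction dualizes to orthogonal projection---after which the hypothesis matches Groemer's theorem verbatim. One should also confirm the smoothness of the translation vector $\beta_{\mathbf{x}}$ (automatic from the explicit formula above) and note that the dimension restriction $n \geq 3$ in Groemer's theorem is precisely what makes this an inductive step capable of lifting the planar result of Theorem~\ref{2D-case} to all dimensions by passing from $2$-flats to $3$-flats to the whole space.
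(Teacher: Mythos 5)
Your proposal is correct and takes essentially the same route as the paper's proof: the identical duality dictionary (the Randers modification as a translation of the dual unit body, and hyperplane restriction dualizing, via a Hahn--Banach extension, to projection of the dual body), the same strategy of fixing two points and running Groemer's theorem over the pencil of hyperplanes through the line joining them, with strict convexity of the dual balls supplying Groemer's hypothesis. The paper merely packages these steps as Proposition~\ref{geometric-Minkowski-Randers}, Lemmas~\ref{remark_1} and~\ref{remark_2}, and the equivariant reformulation of Groemer's theorem in Theorem~\ref{invariant-Groemer}.
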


A key point in the proof is the geometric characterization of Minkowski-Randers metrics as those Finsler metrics on an affine space $V$ for which the unit co-discs are related by translations. 

\begin{proposition}\label{geometric-Minkowski-Randers}
Given a Minkowski-Randers space $(V,F)$ and any two points $\mathbf{x}$ and $\mathbf{y}$ in $V$, the unit co-disc $B_\mathbf{x}^* \subset T_\mathbf{x}^* V$ and the pullback of the unit co-disc $B_\mathbf{y}^* \subset T_\mathbf{y}^* V$ by the translation $\tau_\mathbf{y-x}$ are translates in $T_\mathbf{x}^*V$. Conversely, any $C^2$ Finsler metric on an affine space $V$ satisfying this property is Minkowski-Randers metric. 
\end{proposition}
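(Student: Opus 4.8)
The plan is to reduce the entire statement to a single computation in convex duality: two (possibly asymmetric) Minkowski norms on a fixed vector space that differ by a linear functional have unit co-discs, viewed as convex bodies in the dual space, that differ by a translation, and conversely. Everything else is bookkeeping with the canonical identifications on an affine space.

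First I would fix those identifications. If $U$ is the vector space underlying the affine space $V$, every translation $\tau_{\mathbf{w}}$ has derivative the identity $U \to U$, so its cotangent lift $\tau_\mathbf{w}^*$ is the identity under the canonical identifications $T_\mathbf{x}^*V \cong U^*$. Hence ``the pullback of $B_\mathbf{y}^*$ by $\tau_{\mathbf{y}-\mathbf{x}}$'' is just $B_\mathbf{y}^*$ regarded as a subset of $U^* \cong T_\mathbf{x}^*V$, and the proposition becomes the assertion that all the co-discs $B_\mathbf{x}^* \subset U^*$ are translates of one another. Writing $F_\mathbf{x}$ for the Minkowski norm induced by $F$ at $\mathbf{x}$, I recall that the co-disc is the polar $B_\mathbf{x}^* = \{\xi \in U^* : \xi(\mathbf{v}) \le F_\mathbf{x}(\mathbf{v}) \text{ for all } \mathbf{v}\}$, equivalently the unit ball of the dual norm $F_\mathbf{x}^*$.

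The key lemma I would then prove is: if $N(\mathbf{v}) = \|\mathbf{v}\| + \ell(\mathbf{v})$ with $\|\cdot\|$ a Minkowski norm and $\ell \in U^*$, then $\xi \in B_N^*$ is equivalent to $(\xi - \ell)(\mathbf{v}) \le \|\mathbf{v}\|$ for all $\mathbf{v}$, so $B_N^* = \ell + K^\circ$, where $K^\circ$ is the co-disc of $\|\cdot\|$. The forward direction is then immediate: for a Minkowski--Randers metric $F(\mathbf{x},\mathbf{v}) = \|\mathbf{v}\| + \beta_\mathbf{x}(\mathbf{v})$ this gives $B_\mathbf{x}^* = \beta_\mathbf{x} + K^\circ$, so $B_\mathbf{x}^*$ and the pullback of $B_\mathbf{y}^*$ differ by the translation $\beta_\mathbf{y} - \beta_\mathbf{x}$. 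For the converse I would fix an origin, set $K^\circ := B_\mathbf{0}^*$ and $\|\cdot\| := F_\mathbf{0}$, and use the hypothesis (with one point the origin) to get, for every $\mathbf{x}$, a translate $B_\mathbf{x}^* = \gamma(\mathbf{x}) + K^\circ$; the translation vector between two translates of a bounded body with nonempty interior is unique, so $\gamma(\mathbf{x}) \in U^*$ is well defined and $\gamma(\mathbf{0}) = 0$. By the key lemma, $\gamma(\mathbf{x}) + K^\circ$ is exactly the co-disc of the Randers norm $\|\cdot\| + \gamma(\mathbf{x})$, and since a Minkowski norm is recovered from its co-disc by the involutivity of polar duality, $F_\mathbf{x} = \|\cdot\| + \gamma(\mathbf{x})$. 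Thus $F(\mathbf{x},\mathbf{v}) = \|\mathbf{v}\| + \gamma(\mathbf{x})(\mathbf{v})$, and since $\gamma(\mathbf{x})(\mathbf{v}) = F(\mathbf{x},\mathbf{v}) - \|\mathbf{v}\|$ is linear in $\mathbf{v}$ and $C^2$ in $\mathbf{x}$ (evaluate on a basis), $\gamma$ is a $C^2$ $1$-form and $F$ is Minkowski--Randers.

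I expect the main obstacle to be carrying out the duality lemma cleanly in the asymmetric setting: one must state the polar/support-function formalism and the involutivity of duality correctly for possibly non-reversible Minkowski norms, using one-sided support functions throughout, and check that the translate $\gamma(\mathbf{x}) + K^\circ$ always contains the origin in its interior. The last point is automatic because $B_\mathbf{x}^*$ is the genuine co-disc of a Finsler norm and so contains the origin in its interior. The affine identifications and the regularity of $\gamma$ are then routine.
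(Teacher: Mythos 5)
Your proposal is correct and takes essentially the same approach as the paper: the paper's entire proof consists of the remark that adding a $1$-form $\beta$ to a Finsler metric is tantamount to displacing the co-disc in each cotangent space $T_\mathbf{x}^*M$ by the covector $\beta(\mathbf{x})$, which is exactly your key duality lemma $B_N^* = \ell + K^\circ$. You have merely spelled out what the paper leaves implicit --- the canonical identifications $T_\mathbf{x}^*V \cong U^*$, the uniqueness of the translation vector $\gamma(\mathbf{x})$, the recovery of $F_\mathbf{x}$ by bipolarity, and the regularity of $\gamma$ --- all of which are sound.
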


\begin{proof}
It is sufficient to remark that adding a $1$-form $\beta$ to an arbitrary Finsler metric on a manifold $M$ is tantamount to displacing the co-disc in each cotangent space 
$T_\mathbf{x}^* M$ by the covector $\beta(\mathbf{x})$.  
\end{proof}

Proposition~\ref{geometric-Minkowski-Randers} makes it even clearer that Minkowski-Randers metrics are preserved under invertible affine transformations and that if a sequence of  Minkowski-Randers metrics converge uniformly on compact subsets to a $C^2$ Finsler metric, the limit is also a Minkowski-Randers metric. 

In order to apply Groemer's result to prove Theorem~\ref{Minkowski-Randers-characterization}
we first transform it from its apparent Euclidean form into another that is manifestly equivariant under the action of the linear group. 

\begin{theorem}\label{invariant-Groemer}
Let $X$ be a finite-dimensional vector space of dimension greater than two, let $\mathcal{P}$ and $\mathcal{Q}$ be two convex bodies in $X$ which contain the origin as an interior point, and let $W \subset X$ be a $1$-dimensional subspace. If the dual of $\mathcal{P}$, $\mathcal{P}^*$, is strictly convex and for every hyperplane $Y$ containing $W$ the convex bodies $(\mathcal{P} \cap Y)^*, (\mathcal{Q} \cap Y)^* \subset Y^*$ are translates, then $\mathcal{P}^*$ and $\mathcal{Q}^*$ are also translates. 
\end{theorem}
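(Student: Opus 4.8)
The plan is to reduce Theorem~\ref{invariant-Groemer} to Groemer's theorem (Theorem~\ref{Groemer}) by exploiting the classical duality between hyperplane sections and orthogonal projections under polarity, and then to dispose of the Euclidean structure by introducing an arbitrary inner product. The point is that the hypotheses and conclusion of Theorem~\ref{invariant-Groemer} are purely linear, so any auxiliary inner product may be chosen in order to invoke the Euclidean statement, while the affine-invariant conclusion ``is a translate'' survives the passage back.

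The key lemma I would establish first is the section-projection duality: if $K \subset X$ is a convex body with the origin in its interior and $Y \subset X$ is a linear subspace, then the polar of the section taken inside $Y$ coincides with the image of $K^*$ under the restriction map $\iota_Y^* : X^* \to Y^*$ dual to the inclusion $\iota_Y : Y \hookrightarrow X$; that is, $(K \cap Y)^* = \iota_Y^*(K^*)$ as convex bodies in $Y^*$. The cleanest route is through support functions: for $y \in Y$ one has $h_{\iota_Y^*(K^*)}(y) = h_{K^*}(\iota_Y y) = g_K(y) = g_{K\cap Y}(y) = h_{(K\cap Y)^*}(y)$, where $g$ denotes the gauge and one uses that the support function of $K^*$ is the gauge of $K$ and that the gauge of a section is the restriction of the gauge. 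Since the two bodies have the same support function on $Y$, they are equal.

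Applied to our situation, for every hyperplane $Y$ containing $W$ (necessarily a linear hyperplane, since $W$ is a subspace) the hypothesis that $(\mathcal P \cap Y)^*$ and $(\mathcal Q \cap Y)^*$ are translates in $Y^*$ becomes the statement that $\iota_Y^*(\mathcal P^*)$ and $\iota_Y^*(\mathcal Q^*)$ are translates. Now fix any inner product $\langle\cdot,\cdot\rangle$ on $X$, with induced isomorphism $J : X \to X^*$, and equip each subspace $Y$ with the restricted inner product, with isomorphism $J_Y : Y \to Y^*$. A direct check shows $\iota_Y^* \circ J = J_Y \circ P_Y$, where $P_Y : X \to Y$ is the orthogonal projection; in other words, under the identifications $J, J_Y$ the restriction map $\iota_Y^*$ is exactly orthogonal projection onto $Y$. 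Writing $\widetilde{\mathcal P} := J^{-1}(\mathcal P^*)$ and $\widetilde{\mathcal Q} := J^{-1}(\mathcal Q^*)$ as convex bodies in $X$, the hypothesis thus says that for every hyperplane $Y \supset W$ the orthogonal projections $P_Y(\widetilde{\mathcal P})$ and $P_Y(\widetilde{\mathcal Q})$ are translates in $Y$, while strict convexity of $\mathcal P^*$ passes to $\widetilde{\mathcal P}$ because $J^{-1}$ is a linear isomorphism.

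At this stage Groemer's theorem applies verbatim: $X$ has dimension greater than two, $\widetilde{\mathcal P}$ is strictly convex, and all projections onto hyperplanes containing the line $W$ agree up to translation, whence $\widetilde{\mathcal P}$ is a translate of $\widetilde{\mathcal Q}$. Transporting back by the linear isomorphism $J$, which carries translates to translates, we conclude that $\mathcal P^*$ and $\mathcal Q^*$ are translates, as required. I expect the only real obstacle to be the careful bookkeeping of the three identifications---the section-projection polarity, the self-duality $X \cong X^*$, and the matching $Y^* \cong Y$ that turns the abstract restriction map into a genuine orthogonal projection; once these are in place the reduction to Groemer is immediate.
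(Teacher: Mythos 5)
Your proposal is correct and follows essentially the same route as the paper: both rest on the section--projection duality $(K \cap Y)^* = i_Y^*(K^*)$ (which the paper proves via Hahn--Banach and you prove via support functions) together with the trick of choosing an arbitrary auxiliary inner product to turn the restriction map $i_Y^*$ into an orthogonal projection before invoking Groemer's theorem. The only cosmetic difference is that you pull $\mathcal{P}^*$ and $\mathcal{Q}^*$ back to $X$ through the musical isomorphism $J$ and apply Groemer there, whereas the paper equips $X^*$ itself with a Euclidean structure and applies Groemer directly in $X^*$, identifying $i_Y^*$ with the orthogonal projection onto $\Ann(Y)^\bot$ via the common-kernel lemma.
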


Recall that the {\sl dual of a convex body} $\mathcal{P} \subset X$ which contains the origin as an interior point is the convex body in the dual space $X^*$ defined as
$$
\mathcal{P}^* := \{ {\boldsymbol\xi} \in X^* : {\boldsymbol\xi}(\mathbf{v}) \leq 1 \text{ for all } \mathbf{v} \in P \} .
$$

The equivalence between Theorems~\ref{Groemer} and~\ref{invariant-Groemer} is essentially a tautology although unravelling it takes some ink.  We start with two basic remarks:

\begin{lemma}\label{remark_1}
In the diagram
 $$
\xymatrix{
      &  X \ar[dl]_{\pi_Y}\ar[dr]^{\pi_Z} &   \\
      Y  &     &  Z,}
$$
let $X$,$Y$, and $Z$ be finite-dimensional vector spaces, and let $\pi_Y$ and $\pi_Z$ be surjective linear maps.
If the kernels of these two maps coincide, then the images of any two subsets of $X$ under $\pi_Y$ are translates if and only if their images under $\pi_Z$ are translates. 
\end{lemma}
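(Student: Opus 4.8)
The plan is to reduce everything to the observation that the two projections differ only by a linear isomorphism of the targets, and that linear isomorphisms preserve the relation of being translates. Concretely, since $\pi_Y$ and $\pi_Z$ are both surjective and share the same kernel $K \subset X$, each factors through the quotient $X/K$ as a \emph{linear isomorphism}: writing $\overline{\pi}_Y : X/K \to Y$ and $\overline{\pi}_Z : X/K \to Z$ for the induced maps, I set
$$
\phi := \overline{\pi}_Z \circ \overline{\pi}_Y^{\,-1} : Y \longrightarrow Z .
$$
This $\phi$ is a linear isomorphism satisfying $\pi_Z = \phi \circ \pi_Y$ by construction. The first step is therefore simply to produce $\phi$ and record this intertwining identity.

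The second step is to transport the two projected images by $\phi$. For any subset $A \subset X$ we have, directly from $\pi_Z = \phi \circ \pi_Y$, the equality $\pi_Z(A) = \phi\bigl(\pi_Y(A)\bigr)$. Thus $\phi$ carries the $\pi_Y$-image of any subset of $X$ exactly onto its $\pi_Z$-image.

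The third and final step uses the elementary fact that a linear isomorphism sends translates to translates. If $S, T \subset Y$ satisfy $S = T + \mathbf{w}$ for some $\mathbf{w} \in Y$, then by linearity $\phi(S) = \phi(T) + \phi(\mathbf{w})$, so $\phi(S)$ and $\phi(T)$ are translates in $Z$; and since $\phi^{-1}$ is also linear, the same implication holds in the reverse direction. Applying this to $S = \pi_Y(A)$ and $T = \pi_Y(B)$ for two given subsets $A, B \subset X$, and using $\pi_Z(A) = \phi(\pi_Y(A))$, $\pi_Z(B) = \phi(\pi_Y(B))$ from the second step, yields that $\pi_Y(A)$ and $\pi_Y(B)$ are translates in $Y$ if and only if $\pi_Z(A)$ and $\pi_Z(B)$ are translates in $Z$, which is the assertion.

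I do not expect a genuine obstacle here: the content is entirely the routine verification that $\pi_Y$ and $\pi_Z$ induce the canonical isomorphism $\phi$ on $X/K$, after which preservation of translates is immediate from linearity. The only point deserving a line of care is confirming that $\phi$ is indeed well defined and bijective, which follows from the first isomorphism theorem applied to each surjection with kernel $K$.
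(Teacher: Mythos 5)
Your proof is correct and takes essentially the same route as the paper: the paper defines the intertwining isomorphism directly by sending $\mathbf{y} \in Y$ to the unique element of $\pi_Z(\pi_Y^{-1}\{\mathbf{y}\})$, which is exactly your $\phi = \overline{\pi}_Z \circ \overline{\pi}_Y^{\,-1}$ obtained via the quotient $X/K$. You merely make explicit the well-definedness and the (immediate) fact that linear isomorphisms preserve the translate relation, which the paper leaves to the reader.
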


\begin{proof}
Simply notice that the map that takes a point $\mathbf{y} \in Y$ to the unique element in
the singleton $\pi_Z(\pi_Y^{-1}\{ \mathbf{y} \})$ defines a linear isomorphism between $Y$ and $Z$.
\end{proof}

\begin{lemma}\label{remark_2}
Let $X$ be a finite-dimensional vector space and let $\mathcal{P} \subset X$ be a convex body which 
contains the origin as an interior point. If $Y \subset X$ is a linear subspace and 
$i_Y : Y \rightarrow X$ denotes the inclusion, then the sets $(\mathcal{P} \cap Y)^*$ and 
$i_Y^*(\mathcal{P}^*)$ coincide in $Y^*$. 
\end{lemma}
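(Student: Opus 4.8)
The plan is to prove that the two sets coincide by comparing their support functions, after first disposing of the trivial inclusion. Write $i_Y^* : X^* \to Y^*$ for the restriction map $\boldsymbol\xi \mapsto \boldsymbol\xi \circ i_Y$, whose kernel is the annihilator $\Ann(Y)$. For the easy inclusion, if $\boldsymbol\xi \in \mathcal{P}^*$ then for every $\mathbf{v} \in \mathcal{P} \cap Y$ we have $(i_Y^*\boldsymbol\xi)(\mathbf{v}) = \boldsymbol\xi(\mathbf{v}) \le 1$, so $i_Y^*\boldsymbol\xi \in (\mathcal{P} \cap Y)^*$; hence $i_Y^*(\mathcal{P}^*) \subseteq (\mathcal{P}\cap Y)^*$.

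For the reverse inclusion I would argue that both sides are compact convex sets in $Y^*$ with the same support function. The set $(\mathcal{P}\cap Y)^*$ is the polar of a convex body in $Y$, while $i_Y^*(\mathcal{P}^*)$ is the image of the compact convex set $\mathcal{P}^*$ under the linear map $i_Y^*$, hence also compact and convex; both in fact contain the origin as an interior point since $\mathcal{P}$ does. Because a compact convex set is determined by its support function, it suffices to check that the support functions of the two sets agree on all of $Y$. The key identity is that the support function of a polar body equals the gauge (Minkowski functional) of the original, $h_{\mathcal{P}^*}(\mathbf{v}) = g_{\mathcal{P}}(\mathbf{v})$ with $g_{\mathcal{P}}(\mathbf{v}) := \inf\{t > 0 : \mathbf{v} \in t\mathcal{P}\}$, a consequence of the bipolar theorem $\mathcal{P}^{**} = \mathcal{P}$. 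For $\mathbf{v} \in Y$ one then computes $h_{i_Y^*(\mathcal{P}^*)}(\mathbf{v}) = \sup_{\boldsymbol\xi \in \mathcal{P}^*} \boldsymbol\xi(\mathbf{v}) = g_{\mathcal{P}}(\mathbf{v})$, while $h_{(\mathcal{P}\cap Y)^*}(\mathbf{v}) = g_{\mathcal{P}\cap Y}(\mathbf{v})$. These coincide because, for $\mathbf{v} \in Y$, the condition $\mathbf{v} \in t\mathcal{P}$ is equivalent to $\mathbf{v} \in t(\mathcal{P}\cap Y)$, so $g_{\mathcal{P}}$ and $g_{\mathcal{P}\cap Y}$ agree on $Y$. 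Equality of the support functions then forces the two convex bodies to coincide.

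The main obstacle is really this reverse inclusion, which at bottom is a Hahn--Banach extension statement: every $\boldsymbol\eta \in (\mathcal{P}\cap Y)^*$ must extend to a covector $\boldsymbol\xi \in \mathcal{P}^*$ on all of $X$. The support-function argument packages this cleanly, but if one prefers to argue directly, the same content appears as a separation argument. Assuming no such extension exists, the disjoint compact convex set $\mathcal{P}^*$ and the affine subspace $\boldsymbol\eta_0 + \Ann(Y)$ (where $\boldsymbol\eta_0$ is any fixed extension) can be strictly separated by some $\mathbf{v} \in X$; finiteness of the separation forces $\mathbf{v} \in Y$, and normalizing $\mathbf{v}$ to the boundary of $\mathcal{P}$ produces a point of $\mathcal{P}\cap Y$ violating $\boldsymbol\eta \in (\mathcal{P}\cap Y)^*$, a contradiction. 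Either route relies on compactness of $\mathcal{P}^*$, which is guaranteed precisely because $\mathcal{P}$ contains the origin as an interior point.
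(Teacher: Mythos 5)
Your proof is correct, and the reverse inclusion --- which you rightly identify as the only real content --- is handled by a genuinely different mechanism than the paper's. The paper disposes of it in one sentence: a linear functional ${\boldsymbol\xi}$ on $Y$ with ${\boldsymbol\xi} \leq 1$ on $\mathcal{P} \cap Y$ extends, by the Hahn--Banach theorem (with the gauge of $\mathcal{P}$ as the dominating sublinear functional), to a functional on $X$ that is $\leq 1$ on $\mathcal{P}$, hence lies in $\mathcal{P}^*$. You instead compare support functions: both sets are compact convex subsets of $Y^*$, and their support functions on $Y$ both equal $g_{\mathcal{P}}|_Y$, via the bipolar identity $h_{\mathcal{P}^*} = g_{\mathcal{P}}$ together with the elementary observation $g_{\mathcal{P}}|_Y = g_{\mathcal{P}\cap Y}$ (which is also exactly what makes the paper's gauge-domination step work). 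Since the bipolar theorem is itself a separation/Hahn--Banach statement, the two routes have the same logical core, as you note yourself; what yours buys is that it makes the geometric content explicit --- the common support function is identified, not just the equality of the sets --- and your separation sketch in effect \emph{proves} the needed Hahn--Banach instance rather than citing it, at the cost of length. Two small points to tidy: your remark that both sets contain the origin as an interior point ``since $\mathcal{P}$ does'' misattributes the reason (interiority of the origin in a polar comes from \emph{boundedness} of the primal body; $0 \in \operatorname{int}\mathcal{P}$ instead yields boundedness of $\mathcal{P}^*$) --- harmless, since this claim is never used; and in the separation sketch you should note that the separating vector $\mathbf{v}$ is nonzero and has $g_{\mathcal{P}}(\mathbf{v}) > 0$ (automatic, because $0 \in \mathcal{P}^*$ rules out strict separation by $\mathbf{v} = 0$ and $\mathcal{P}$ is compact), so that the normalization $\mathbf{v}/g_{\mathcal{P}}(\mathbf{v})$ landing on $\partial\mathcal{P} \cap Y$ is legitimate.
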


\begin{proof}
Since
\begin{align*}
(\mathcal{P} \cap Y)^* &= \{{\boldsymbol\xi} \in Y^* : {\boldsymbol\xi}(\mathbf{v}) \leq 1 \text{ for all } \mathbf{v} \in \mathcal{P} \cap Y \} 
\textrm{ and} \\
i_Y^*(\mathcal{P}^*) &= \{i_Y^*({\boldsymbol\eta}) :  {\boldsymbol\eta}(\mathbf{w}) \leq 1 \text{ for all } \mathbf{w} \in \mathcal{P} \} \, ,
\end{align*}
we have the obvious inclusion  $i_Y^*(\mathcal{P}^*) \subset (\mathcal{P} \cap Y)^*$. The reverse inclusion is a consequence of the Hahn-Banach theorem: any linear functional ${\boldsymbol\xi}$ on the subspace $Y$ which satisfies ${\boldsymbol\xi}(\mathbf{v}) \leq 1$ on $\mathcal{P} \cap Y$ can be extended to a linear functional ${\boldsymbol\eta}$  on $X$ which satisfies ${\boldsymbol\eta}(\mathbf{w}) \leq 1$ on $\mathcal{P}$.
\end{proof}

\begin{proof}[Proof of Theorem~\ref{invariant-Groemer}]
In order to apply Groemer's theorem, we provide $X^*$ with an arbitrary Euclidean structure and engage in a simple translation exercise: to the one-dimensional subspace $W \subset X$ corresponds the one-dimensional subspace $W' := \Ann(W)^\bot \subset X^*$ consisting of those vectors in $X^*$ that are perpendicular to the annihilator of $W$. To a hyperplane $Y \subset X$ containing $W$ corresponds the hyperplane $Y' := \Ann(Y)^\bot$ containing $W'$. 

Now consider the diagram
 $$
\xymatrix{
      &  X^* \ar[dl]_{\pi}\ar[dr]^{i_Y^*} &   \\
      Y' &     &  Y^*,}
$$
where $\pi$ is the orthogonal projection and the linear surjection $i_Y^*$ is the dual of the inclusion $i_Y : Y \rightarrow X$. The kernel of both maps is the annihilator of $Y$ and so, by Lemma~\ref{remark_1} the orthogonal projections of $\mathcal{P}^*$ and $\mathcal{Q}^*$ are translates in $Y'$ if and only if the same is true of their images 
$i_Y^*(\mathcal{P}^*)$ and $i_Y^*(\mathcal{Q}^*)$ in $Y^*$. Since Lemma~\ref{remark_2} tells us that 
$$
i_Y^*(\mathcal{P}^*) = (\mathcal{P} \cap Y)^* \textrm{ and } i_Y^*(\mathcal{Q}^*) = (\mathcal{Q} \cap Y)^* \, ,
$$
the hypotheses of Groemer's theorem are fulfilled and we conclude that $\mathcal{P}^*$ is a translate of $\mathcal{Q}^*$.
\end{proof}

The proof of Theorem~\ref{Minkowski-Randers-characterization} is a simple consequence of Theorem~\ref{invariant-Groemer} and the geometric characterization of Minkowski-Randers spaces given in Proposition~\ref{geometric-Minkowski-Randers}.

\begin{proof}[Proof of Theorem~\ref{Minkowski-Randers-characterization}]
Let $\mathbf{x}$ and $\mathbf{y}$ be two arbitrary distinct points in the affine space $V$ and let $\tau_{\mathbf{y-x}}$ be the translation by the vector $\mathbf{y-x}$. We must show that the unit co-disc  $\mathcal{P}^* := B_{\mathbf{x}}^*$ in $T_\mathbf{x}^*V$ is a translate of $\mathcal{Q}^* := \tau_{\mathbf{y-x}}^*(B_{\mathbf{y}}^*)$. 

In order to directly apply Theorem~\ref{invariant-Groemer}, let us denote by $X$ the tangent space $T_\mathbf{x} V$, let the one-dimensional subspace $W \subset X$ be the tangent space to the line $\mathbf{xy}$ at the point $\mathbf{x}$, and let $\mathcal{P}$ and $\mathcal{Q}$ be the convex bodies $B_\mathbf{x}$ and $D\tau_\mathbf{x-y}(B_\mathbf{y})$, respectively.

By hypothesis, the restriction of the Finsler metric in $V$ to every affine hyperplane is the sum of a translation invariant metric and a $1$-form. By Proposition~\ref{geometric-Minkowski-Randers} this means that
if $U \subset V$ is any affine hyperplane containing the line $\mathbf{xy}$ and $Y \subset X$ is its tangent space at $\mathbf{x}$, we have that $(\mathcal{P} \cap Y)^*$ is a translate of $(\mathcal{Q} \cap Y)^*$. Theorem~\ref{invariant-Groemer} implies that $\mathcal{P}^* = B_{\mathbf{x}}^*$ and $\mathcal{Q}^* = \tau_{\mathbf{y-x}}^*(B_{\mathbf{y}}^*)$ are translates. Since $\mathbf{x}$ and $\mathbf{y}$ are arbitrary, we can again apply Proposition~\ref{geometric-Minkowski-Randers} to conclude that the Finsler metric on $V$ is Minkowski-Randers.
\end{proof}

Using very simple arguments, we can substantially strengthen Theorem~\ref{Minkowski-Randers-characterization}: 

\begin{corollary}\label{Minkowski-Randers-dense-planes}
An $n$-dimensional affine space $V$ provided with a $C^2$ Finsler metric is a Minkowski-Randers space if and only if for some $k$ $(2 \leq k \leq n-1)$ the set of  $k$-planes in $V$ which inherit a Minkowski-Randers metric is dense in the affine Grassmannian $\Graff_k(V)$.
\end{corollary}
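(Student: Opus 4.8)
The plan is to reduce the statement about $k$-planes to the hyperplane case already settled by Theorem~\ref{Minkowski-Randers-characterization}, exploiting the fact that the Minkowski-Randers property is both \emph{local-to-global} and \emph{stable under limits}. The key structural observation, recorded after Proposition~\ref{geometric-Minkowski-Randers}, is that Minkowski-Randers metrics are preserved under invertible affine transformations and that a $C^2$ Finsler metric which is a uniform-on-compacta limit of Minkowski-Randers metrics is itself Minkowski-Randers. These two facts are the engine of the whole argument, so I would invoke them explicitly at the outset.

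First I would dispose of the easy direction: if $(V,F)$ is Minkowski-Randers, then $F$ restricts to a Minkowski-Randers metric on every affine subspace of every dimension, since adding a $1$-form and a translation-invariant metric is a property that restricts to subspaces (one simply pulls back the norm and the form). In particular \emph{every} $k$-plane inherits a Minkowski-Randers metric, so the set of such $k$-planes is all of $\Graff_k(V)$ and is trivially dense.

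For the substantial direction, I would argue by upward induction, raising the dimension of the flats one step at a time, each step being an application of Theorem~\ref{Minkowski-Randers-characterization} applied \emph{inside} a flat. Concretely, suppose the set of $k$-planes carrying an induced Minkowski-Randers metric is dense in $\Graff_k(V)$ for some $k$ with $2 \le k \le n-1$. Fix a $(k+1)$-plane $U$ that is a limit of such good $k$-planes in a suitable sense, and consider the induced Finsler metric on $U$. Inside $U$, a dense family of $k$-dimensional flats (these are the hyperplanes of $U$) inherit Minkowski-Randers metrics; since the Minkowski-Randers property is closed under limits, \emph{every} hyperplane of $U$ does, and Theorem~\ref{Minkowski-Randers-characterization} then forces $U$ itself to be Minkowski-Randers. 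Running this one dimension at a time from $k$ up to $n-1$, and then applying Theorem~\ref{Minkowski-Randers-characterization} one final time to $V$ itself (whose hyperplanes are now all Minkowski-Randers), yields that $(V,F)$ is Minkowski-Randers.

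The main obstacle I anticipate is bookkeeping the density and limit arguments correctly: one must verify that for a dense set of $(k+1)$-flats $U$, the $k$-dimensional flats of $U$ which are good form a dense subset of the Grassmannian of hyperplanes of $U$, so that the limit closure argument genuinely hands us \emph{all} hyperplanes of $U$. This requires a fibration-type statement relating $\Graff_k(V)$ to the bundle whose fibre over $U \in \Graff_{k+1}(V)$ is the Grassmannian of hyperplanes of $U$, together with the observation that a dense set downstairs restricts to a dense set in almost every fibre. I would handle this by noting that the good $k$-planes form an open-in-measure or residual set (being the preimage of translate-coincidence conditions which are closed), so that a Fubini- or Baire-type argument produces, for a dense set of $U$, a dense family of good hyperplanes within $U$. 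The use of the limit-stability of the Minkowski-Randers class is what lets us upgrade ``dense'' to ``all'' at each stage, and this is precisely where the hypothesis that $F$ is $C^2$ (so that the limit is again a genuine Finsler metric) is used.
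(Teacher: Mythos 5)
There is a genuine gap, and it sits exactly where you flagged ``the main obstacle'': the claim that density of good $k$-planes in $\Graff_k(V)$ yields, for a dense set of $(k+1)$-planes $U$, a dense family of good hyperplanes \emph{inside} $U$. This is false. The set of $k$-planes contained in a fixed $(k+1)$-plane $U$ is a positive-codimension, nowhere dense submanifold of $\Graff_k(V)$, so a dense subset of $\Graff_k(V)$ carries no information about any individual fibre of the incidence correspondence: it can miss it entirely. Concretely, if the good set were merely a countable dense set, then the set of $(k+1)$-planes containing even one good $k$-plane would be a countable union of $(n-k-1)$-dimensional submanifolds of $\Graff_{k+1}(V)$, hence meager---so for a residual set of $U$ your induction step has nothing to work with. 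Your proposed repair via a ``Fubini- or Baire-type argument'' does not close this, because the hypothesis is only density, and density implies neither full measure nor residuality; and your parenthetical that the good set is ``residual (being the preimage of \dots closed conditions)'' conflates closed with residual.

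The irony is that the correct observation is hiding in your own parenthetical: the good set is \emph{closed} in $\Graff_k(V)$, and closed plus dense equals everything---no propagation of density through the incidence correspondence is needed. This is precisely the paper's route, and it upgrades ``dense'' to ``all'' at level $k$ \emph{before} any dimension-raising. To prove closedness one needs an actual argument, not just the abstract limit-stability remark: given $Y_n \to Y$ in $\Graff_k(V)$ with each $Y_n$ good, fix an affine projection $\pi : V \to Y$, note that for large $n$ the restrictions $\pi_n := \pi_{|Y_n} : Y_n \to Y$ are affine isomorphisms, pull back the induced metrics to get Minkowski-Randers metrics $F_n' := \pi_n^{-1*}F_n$ on $Y$, and use continuity of the ambient metric to see that $F_n' \to F$ uniformly on compact sets; the remark after Proposition~\ref{geometric-Minkowski-Randers} then gives that $F$ is Minkowski-Randers. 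Once \emph{every} $k$-plane is good, your upward dimension-raising is sound and is essentially the paper's other half (phrased there as induction on the ambient dimension with $k$ fixed, starting at $n = k+1$, applying Theorem~\ref{Minkowski-Randers-characterization} at each step): every hyperplane of a $(k+1)$-plane $U$ is now good \emph{unconditionally}, so $U$ is Minkowski-Randers, and iterating up to $V$ finishes the proof. So reorder your argument---closedness first, induction second---and discard the fibrewise-density machinery, which cannot be made to work from the stated hypothesis.
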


\noindent 
{\it Remark.} We will use the term {\sl $k$-plane} as an abbreviation for {\sl $k$-dimensional affine subspace.} It would have been more familiar to use the term $k$-flat, but we need to reserve the term
{\sl flat} for a totally geodesic submanifold of zero curvature. 

\begin{proof}
Let us start by noticing that if for some fixed $k$ $(2 \leq k \leq n-1)$ all $k$-planes in $V$ provided
with their induced metric are Minkowski-Randers spaces, then $V$ is a Minkowski-Randers space. Indeed, we have just proved this for the case $k = n-1$, and the general case is obtained by fixing $k$ and applying induction on
$n$ starting with $n = k + 1$.

We now show that if the set of $k$-planes which are Minkowski-Randers spaces with their inherited metric is dense in the affine Grassmannian of $k$-planes in $V$, then every $k$-plane in $V$ is a Minkowski-Randers space. 

Let $Y$ be a $k$-plane in $V$, let $F$ denote the Finsler metric it inherits from $V$, and let $\pi : V \rightarrow Y$ be an affine projection of $V$ onto $Y$ (e.g., along $(n-k)$-dimensional affine subspaces orthogonal to $Y$ for some Euclidean structure). If $\{Y_n : n \in \mathbb{N}\}$ is a sequence of $k$-planes that converge to $Y$, we may assume that all the maps
$$
\pi_n := \pi_{|Y_n} : Y_n \longrightarrow Y
$$
are affine isomorphisms.  Since the class of Minkowski-Randers metrics is preserved by affine isomorphisms,
if the induced metric on $Y_n$, $F_n$, is Minkowski-Randers, then $F'_n := \pi_n^{-1*} F_n$ is a Minkowski-Randers metric on $Y$. The convergence of the $k$-planes $Y_n$ to $Y$ in the affine Grassmannian $\Graff_k(V)$ and the continuity of the ambient metric implies that the $F'_n$ converge to $F$ uniformly on compact subsets and this implies that $F$ is Minkowski-Randers. 
\end{proof}

\section{Main theorem and applications} \label{Applications-section}

We are finally ready to prove our main results and discuss some related questions.  

\begin{maintheorem}{I} 
Let the distance function $d$ define a possibly asymmetric metric on $\R^n$ for which straight lines are geodesics. If the metric is invariant under a Euclidean crystallographic group $\Gamma$ and is derived from a $C^2$ Finsler metric $F$, then there exists a possibly asymmetric norm $\| \cdot \|$ and a function $f$ on $\R^n$ such that 
$$
d(\mathbf{x},\mathbf{y}) = \| \mathbf{y - x} \| + f(\mathbf{y}) - f(\mathbf{x})
$$ 
for every pair of points $\mathbf{x}$, $\mathbf{y}$ in $\R^n$.
\end{maintheorem}

\begin{proof}
We start by proving that $(\mathbb{R}^n, F)$ is a Minkowski-Randers space by showing that there is a dense
subset of $2$-planes in $\mathbb{R}^n$ on which the induced metric is Minkowski-Randers. 

By Bieberbach's first theorem, we may assume that $\Gamma$ contains the group $\mathbb{Z}^n$ of translations 
by integer vectors. If $Y$ is a $2$-plane of the form
$$
Y = \{ s\mathbf{m}_1 + t\mathbf{m}_2 + \mathbf{y}: s, t \in \mathbb{R} \}
$$
for some given linearly independent vectors $\mathbf{m}_1$ and~$\mathbf{m}_2$ in~$\mathbb{Z}^n$ and a  given
point $\mathbf{y} \in \mathbb{R}^n$, then $Y$ {\sl and its induced metric} are invariant under the $\mathbb{Z}^2$ action
$$
\left((p,q), \mathbf{x}\right) \longmapsto \mathbf{x} + p\mathbf{m}_1 + q\mathbf{m}_2 .
$$
Theorem~\ref{2D-case} then implies that the induced Finsler metric on $Y$ is the sum of a translation-invariant metric and a closed $1$-form. In particular $Y$ is a  Minkowski-Randers space. 

Since the class of $2$-planes that are parallel to a subspace spanned by two linearly independent integer vectors is dense in $\Graff_2(\mathbb{R}^n)$, Corollary~\ref{Minkowski-Randers-dense-planes} tells us that $(\mathbb{R}^n, F)$ is a Minkowski-Randers space and so there exists Minkowski norm $\| \cdot\|$
and a differential $1$-form $\beta$ on $\mathbb{R}^n$ such that 
$$
F(\mathbf{x},\mathbf{v}) = \|\mathbf{v}\| + \beta_\mathbf{x}(\mathbf{v}) .
$$
Moreover, since the pullbacks or restrictions of $\beta$ to a dense set of $2$-planes are closed, $\beta$ must be a closed $1$-form, and---given the regularity hypotheses in the theorem and in our definition of Minkowski norm---equal to the differential of a $C^3$ function $f: \mathbb{R}^n \rightarrow \mathbb{R}$. 

We conclude that if $d : \mathbb{R}^n \times \mathbb{R}^n \rightarrow [0, \infty)$ is the distance defined
by the Finsler metric $F$, then 
$$
d(\mathbf{x},\mathbf{y}) = \| \mathbf{y - x} \| + f(\mathbf{y}) - f(\mathbf{x})
$$ 
for every pair of points $\mathbf{x}$, $\mathbf{y}$ in $\R^n$.
\end{proof}

Note that Theorem~\ref{differential-geometric-version}, the differential-geometric formulation of this result, follows at once from Theorem~\ref{metric-version} by considering the universal cover and deck transformations of compact flat Riemannian manifolds. We can also extend Theorems~\ref{metric-version} and~\ref{differential-geometric-version} from Finsler metrics to $1$-densities by means of the following simple

\begin{lemma}
Let $(M,F)$ be a Finsler manifold and let $L: TM \rightarrow \mathbb{R}$ be a $1$-density with compact support. For all sufficiently small numbers $\epsilon > 0$, $F + \epsilon L$ is a Finsler metric. As a consequence, if $(\mathbb{R}^n, F)$ is a periodic Finsler metric and $L$ is a periodic $1$-density on $\mathbb{R}^n$, for all sufficiently small numbers $\epsilon > 0$, $F + \epsilon L$ is a periodic Finsler metric.
\end{lemma}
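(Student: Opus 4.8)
The plan is to verify directly that $F + \epsilon L$ satisfies the defining conditions of a Finsler metric: positivity off the zero section, fibrewise homogeneity of degree one, $C^k$-regularity off the zero section, and the strong convexity expressed by positive-definiteness of the fibrewise Hessian of its square (which in turn yields the triangle inequality). Homogeneity and regularity are immediate, since they are inherited term by term from $F$ and $L$. The real content is that positivity and strong convexity, which hold strictly for $F$ (the case $\epsilon = 0$), are open conditions that survive a small perturbation; the role of the compact support of $L$ is precisely to make ``sufficiently small'' a single uniform threshold rather than one that varies with the base point.

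First I would pass to the indicatrix bundle $\{F = 1\}$, on which everything is determined by homogeneity. Let $C \subseteq M$ be the compact projection of the support of $L$ to the base. The portion of the indicatrix lying over $C$ is then compact, so $L$, together with its fibrewise first and second derivatives, is continuous and hence bounded there, while $L$ vanishes identically away from $C$. Consequently $L$ is bounded by some constant $B$ on the entire indicatrix, and $F + \epsilon L = 1 + \epsilon L \geq 1 - \epsilon B > 0$ there whenever $\epsilon < 1/B$; by homogeneity this gives positivity off the zero section.

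For strong convexity I would expand $(F + \epsilon L)^2 = F^2 + 2\epsilon F L + \epsilon^2 L^2$ and examine its fibrewise Hessian, again a degree-zero object determined on the indicatrix. Over $C$ the fibrewise Hessian of $F^2$ is a positive-definite symmetric form whose smallest eigenvalue, being a positive continuous function on a compact set, is bounded below by some $c > 0$; the corrections of orders $\epsilon$ and $\epsilon^2$ involve only $L$ and its fibrewise derivatives, which are uniformly bounded over $C$ and vanish off $C$. Hence for $\epsilon$ small enough that the total correction has operator norm below $c$, the Hessian of $(F + \epsilon L)^2$ remains positive-definite over $C$, while off $C$ it simply equals that of $F^2$. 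This is the one step to watch, and the main obstacle: the argument hinges on the \emph{uniformity} of the lower bound $c$ and of the bounds on $L$, which is exactly what compactness of $C$ supplies and which would break down over a noncompact support.

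For the periodic statement I would reduce to the compact-support case by a cutoff in the base. Choose $\phi : \mathbb{R}^n \to [0,1]$ smooth, equal to $1$ on $[0,1]^n$ and of compact support; since $\phi$ depends only on $\mathbf{x}$, the product $\phi L$ is again a $1$-density, now of compact support, so by the first part $F + \epsilon \phi L$ is a Finsler metric for all small $\epsilon$, and in particular its restriction to every fibre over $[0,1]^n$ is a Minkowski norm. Given any $\mathbf{x} \in \mathbb{R}^n$, translating by a suitable $\mathbf{m} \in \mathbb{Z}^n$ so that $\mathbf{x} - \mathbf{m} \in [0,1)^n$ and using the periodicity of both $F$ and $L$ shows that the restriction of $F + \epsilon L$ to the fibre over $\mathbf{x}$ coincides with that of the Finsler metric $F + \epsilon \phi L$ over $\mathbf{x} - \mathbf{m}$, hence is itself a Minkowski norm. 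As $F + \epsilon L$ is manifestly periodic, this establishes that it is a periodic Finsler metric.
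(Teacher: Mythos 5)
Your proposal is correct and is essentially the paper's argument spelled out in full: the paper's one-line proof (``immediate from the compactness of $M$, the homogeneity of $L$ and $F$ and the conditions of positivity and convexity'') is exactly your observation that homogeneity reduces everything to the indicatrix, where compactness gives uniform bounds making positivity and positive-definiteness of the fibrewise Hessian open conditions under small perturbation. The only cosmetic deviation is in the periodic case, where the paper's appeal to compactness of $M$ suggests descending to the torus $\mathbb{R}^n/\mathbb{Z}^n$ and applying the first part there, whereas you stay in $\mathbb{R}^n$ with a cutoff-and-translate argument; both are valid and equally short.
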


\begin{proof}
The proof is immediate from the compactness of $M$, the homogeneity of $L$ and $F$ and the conditions of
positivity and convexity that define a Finsler metric.
\end{proof}

\begin{maintheorem}{I${}^{\prime \prime}$}
Let $L(\mathbf{x}, \mathbf{v})$ be a continuous, real-valued function on $\mathbb{R}^n \times  \mathbb{R}^n $ that is $C^2$ on $\mathbb{R}^n \times  \mathbb{R}^n \! \setminus \! \{\mathbf{0}\}$ and which, on this open set, satisfies the system of differential equations
$$
\frac{\partial^2 L}{\partial x_i \partial v_j} = \frac{\partial^2 L}{\partial x_j \partial v_i} \ \ 
(1 \leq i, j \leq n) .
$$
If $L(\mathbf{x} + \mathbf{m}, \lambda \mathbf{v}) = \lambda L(\mathbf{x}, \mathbf{v})$ for all  $\lambda > 0$ and all integer vectors $\mathbf{m} \in \mathbb{Z}^n$, then there exists a $C^3$ function
$f : \mathbb{R}^n \rightarrow \mathbb{R}$ such that $L(\mathbf{x}, \mathbf{v}) - df(\mathbf{x})\cdot \mathbf{v}$
is independent of the $\mathbf{x}$-variable. 
\end{maintheorem}

\begin{proof}
Recall that, by Hamel's theorem, all the system of differential equations says is that the $1$-density $L$ is projective. If we add to $L$ a sufficiently large Euclidean metric, we obtain a projective Finsler metric which is invariant under integer translations. By Theorem~\ref{metric-version} this metric is the sum of a Minkowski norm and the differential of a $C^3$ function $f$ on $\mathbb{R}^n$. Since the Euclidean metric is invariant under translations, it follows that the difference $L - df$  must also be invariant under translations. 
\end{proof}

Besides indicating that the main results of this paper belong more properly to variational calculus than 
to Finsler geometry, the extension to periodic, projective $1$-densities gives us the flexibility to consider differences of Finsler metrics. For instance, the following characterization of projective $1$-densities of compact support immediately implies Theorem~\ref{determination-at-infinity} when applied to the difference of two projective Finsler metrics that agree outside a compact set.

\begin{theorem}\label{projective-densities-compact-support}
A $C^2$ projective $1$-density on $\mathbb{R}^n$ that vanishes outside a compact set is the differential of a $C^3$ function. 
\end{theorem}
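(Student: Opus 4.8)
The plan is to reduce the statement to the periodic case already settled in Theorem~\ref{result-for-densities} by periodizing $L$ over a suitably coarse lattice. Write the support of $L$ as contained in $K \times \mathbb{R}^n$ for some compact set $K \subset \mathbb{R}^n$. I would first choose a lattice so coarse that the translates of $K$ by its nonzero vectors are pairwise disjoint and $K$ lies in the interior of a fundamental domain $Q$; after a linear change of coordinates (which preserves both Hamel's equations and homogeneity in $\mathbf{v}$) I may assume this lattice is $\mathbb{Z}^n$. I then set $\tilde{L}(\mathbf{x},\mathbf{v}) := \sum_{\mathbf{m} \in \mathbb{Z}^n} L(\mathbf{x}+\mathbf{m},\mathbf{v})$. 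Because $K$ is compact the sum is locally finite, so $\tilde{L}$ is a continuous $1$-density that is $C^2$ off the zero section, homogeneous of degree one in $\mathbf{v}$, and $\mathbb{Z}^n$-periodic. Each summand satisfies Hamel's equations since $L$ does, and as these equations are linear, so does $\tilde{L}$; thus $\tilde{L}$ is a periodic projective $1$-density.

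Applying Theorem~\ref{result-for-densities} produces a $C^3$ function $\tilde{f}$ such that $\tilde{L}(\mathbf{x},\mathbf{v}) = M(\mathbf{v}) + d\tilde{f}(\mathbf{x})\cdot\mathbf{v}$, where $M(\mathbf{v})$ is independent of $\mathbf{x}$. The key observation is that $Q \setminus K$ is nonempty and any point $\mathbf{x}_0$ in it lies outside every translate $K + \mathbf{m}$, so every term of the defining sum vanishes and $\tilde{L}(\mathbf{x}_0,\cdot) \equiv 0$. Evaluating the displayed identity at $\mathbf{x}_0$ gives $M(\mathbf{v}) = -\,d\tilde{f}(\mathbf{x}_0)\cdot\mathbf{v}$, which is \emph{linear} in $\mathbf{v}$. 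Consequently $\tilde{L}(\mathbf{x},\mathbf{v})$ is linear in $\mathbf{v}$ for every $\mathbf{x}$; that is, $\tilde{L}$ is a $C^2$ differential $1$-form.

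It remains to transfer this back to $L$. On the fundamental domain $Q$ only the $\mathbf{m}=\mathbf{0}$ term of the sum can be nonzero, so $\tilde{L} = L$ there and $L(\mathbf{x},\cdot)$ is linear for $\mathbf{x} \in Q$. For $\mathbf{x} \notin K$ we have $L(\mathbf{x},\cdot) \equiv 0$, which is also linear. Since $K \subset Q$, every point of $\mathbb{R}^n$ lies either in $Q$ or outside $K$, so $L(\mathbf{x},\cdot)$ is linear in $\mathbf{v}$ for all $\mathbf{x}$; write $L(\mathbf{x},\mathbf{v}) = \eta(\mathbf{x})\cdot\mathbf{v}$ for a $C^2$ field of covectors $\eta$. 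Because $L$ is now linear in $\mathbf{v}$ the second $\mathbf{v}$-derivatives vanish, and Hamel's equations reduce exactly to the assertion that $\eta$ is a closed $1$-form. As $\mathbb{R}^n$ is contractible, the Poincaré lemma yields a $C^3$ function $f$ with $df = \eta$, so $L(\mathbf{x},\mathbf{v}) = df(\mathbf{x})\cdot\mathbf{v}$, as claimed.

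The hard part is the periodization step, not the final integration. Choosing the lattice coarse enough that the translates of $K$ are disjoint is precisely what guarantees the existence of points where $\tilde{L}$ vanishes identically, and it is this that upgrades the $\mathbf{x}$-independent remainder $M$ from a general (possibly nonlinear) $1$-density to an honest linear form; without disjointness one cannot rule out a nonlinear $M$. Once $L$ is recognized as a compactly supported closed $1$-form, recovering the potential $f$ is routine.
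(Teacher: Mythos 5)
Your proof is correct and takes essentially the same route as the paper: periodize $L$ over a lattice coarse enough that the support sits inside a fundamental domain, apply the periodic rigidity result, and exploit the region where the periodization vanishes to force the $\mathbf{x}$-independent remainder to be linear, finishing with Hamel's equations and the Poincar\'e lemma. The only cosmetic difference is that you invoke Theorem~\ref{result-for-densities} as a black box, whereas the paper adds a large Euclidean metric and applies Theorem~\ref{differential-geometric-version} on the torus---but since that trick is exactly how the paper proves Theorem~\ref{result-for-densities}, the underlying machinery is identical.
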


\begin{proof}
Choose a lattice $\Lambda \subset \mathbb{R}^n$ such that the support of the $1$-density 
is contained in the tangent bundle of the interior of a fundamental domain. Extend the definition of the $1$-density by periodicity and add a sufficiently large Euclidean metric so as to obtain a Finsler metric $F$ on the torus $\mathbb{R}^n/\Lambda$. By Theorem~\ref{differential-geometric-version}, $F$ is the sum of a flat metric and a closed form. The flat metric agrees with the Euclidean metric outside
the support of the $1$-density and hence equals the Euclidean metric. It follows that the $1$-density, as seen on the torus, is a closed $1$-form and, therefore, it is the differential of a function when lifted to $\mathbb{R}^n$.
\end{proof}

Because many interesting Riemannian manifolds carry {\sl compact  flats} (i.e., compact, totally geodesic submanifolds of zero curvature), Theorem~\ref{differential-geometric-version} can be used to obtain some surprising results in geodesic rigidity. Just as in the Riemannian theory (for some of its most notable recent results see the papers~\cite{Matveev:2003} and~\cite{Matveev:2009}), it is useful to make a distinction between the equivalence relation comprising those metrics with the same unparameterized geodesics, and the more restrictive relation comprising those for which the midpoints of geodesic segments agree.  

\begin{definition}
We shall say that two complete Finsler metrics $F_1$ and $F_2$ on a manifold $M$ are {\sl projectively equivalent} if for each geodesic $t \mapsto \gamma(t)$  in $(M,F_1)$ there exists an orientation preserving diffeomorphism $\sigma : \mathbb{R} \rightarrow \mathbb{R}$ for which $t \mapsto \gamma(\sigma(t))$ is a geodesic in $(M,F_2)$. We shall say that the Finsler metrics are {\sl affinely equivalent} if we can always take
$\sigma(t) = \lambda t$, where the positive real number $\lambda$, like the reparameterization $\sigma$, depends on the geodesic. 
\end{definition}

Affine equivalence is a very restrictive condition. For instance, Busemann's construction yields many reversible Finsler metrics on $\mathbb{R}^n$ that are projectively equivalent to the Euclidean metric, but a Finsler metric that is affinely equivalent to the Euclidean metric must be a Minkowski norm (see Section~17 in~\cite{Busemann:1955}). Likewise, by a theorem of Szab\'o~\cite{Szabo:1981} (see also~\cite{Vincze:2005}), reversible Berwald Finsler metrics are precisely those reversible Finsler metrics that are affinely equivalent to a Riemannian metric. Theorem~\ref{differential-geometric-version} implies that  a $C^2$ reversible Finsler metric on a compact manifold is projectively equivalent to a flat Riemannian metric if and only if it is affinely equivalent to it. This result can be extended to a larger class of Riemannian manifolds: 

\begin{corollary}\label{compact-flats-rigidity}
If $(M,g)$ is a Riemannian manifold where every geodesic lies in a compact flat 
of dimension greater than one, then every reversible Finsler metric on $M$ that is projectively  equivalent to $g$ is in fact affinely equivalent to it. In particular, every reversible Finsler metric on $M$ that is projectively equivalent to $g$ is a Berwald metric.
\end{corollary}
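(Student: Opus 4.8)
The plan is to reduce everything to a single compact flat, apply Theorem~\ref{differential-geometric-version} there, and then let reversibility do the rest. First I would fix a reversible $C^2$ Finsler metric $F$ that is projectively equivalent to $g$ and choose an arbitrary compact flat $N \subset M$ of dimension greater than one. Equipped with the induced metric $g|_N$, the submanifold $N$ is a compact flat Riemannian manifold, so the hypotheses of Theorem~\ref{differential-geometric-version} apply as soon as I check that $F|_N$ has the same unparametrized geodesics as $g|_N$. One inclusion is immediate: since $N$ is totally geodesic, every $g|_N$-geodesic is a $g$-geodesic of $M$ lying in $N$; projective equivalence makes its trace that of an $F$-geodesic, and an $F$-geodesic confined to $N$ is critical for the $F$-length among all curves of $M$, hence a fortiori among the curves that stay in $N$, i.e. an $F|_N$-geodesic. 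For the reverse inclusion I would use uniqueness of Finsler geodesics: given $p \in N$ and a direction $v \in T_pN$, the $g|_N$-straight line through $(p,v)$ is, by what was just shown, an $F|_N$-geodesic, so by uniqueness it is \emph{the} $F|_N$-geodesic issuing from $(p,v)$; thus every $F|_N$-geodesic is a straight line, that is, a $g|_N$-geodesic. Hence $F|_N$ and $g|_N$ share their unparametrized geodesics.

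Next I would apply Theorem~\ref{differential-geometric-version} to $(N, g|_N, F|_N)$ to write $F|_N = F_0 + \beta$, with $F_0$ a flat (translation-invariant) Finsler metric and $\beta$ a closed $1$-form. Here reversibility becomes decisive. Evaluating $F|_N(-v) = F|_N(v)$ and using that $\beta$ is linear in $v$ yields $F_0(-v) - \beta_x(v) = F_0(v) + \beta_x(v)$, so that $2\beta_x(v) = F_0(-v) - F_0(v)$. The right-hand side is independent of the base point $x$ because $F_0$ is translation-invariant; hence $\beta$ is itself translation-invariant, and $F|_N = F_0 + \beta$ is a translation-invariant reversible Minkowski norm---a genuine flat Finsler metric, with the $1$-form absorbed. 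Its constant-speed geodesics are straight lines traversed at constant speed, hence affinely parametrized; comparing with the affinely parametrized straight lines of $g|_N$ shows that $F|_N$ is \emph{affinely} equivalent to $g|_N$.

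Finally, because every geodesic of $(M,g)$ lies in some compact flat $N$ of the above type, and on each such $N$ the $F$- and $g$-geodesics differ only by a linear reparametrization, the metrics $F$ and $g$ are affinely equivalent on all of $M$. For the last assertion I would invoke the theorem of Szab\'o~\cite{Szabo:1981} quoted earlier: a reversible Finsler metric affinely equivalent to a Riemannian metric is Berwald, so $F$ is a Berwald metric.

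The main obstacle I anticipate is the first paragraph---the careful verification that passing to the totally geodesic flat $N$ preserves the class of unparametrized geodesics in \emph{both} directions. The direction resting on uniqueness of $F|_N$-geodesics is the subtle one, since it is what rules out $F|_N$ acquiring spurious geodesics not coming from $M$; once this is secured, both the application of Theorem~\ref{differential-geometric-version} and the reversibility computation that annihilates $\beta$ are routine.
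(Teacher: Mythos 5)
Your proposal is correct and follows essentially the same route as the paper: restrict to a compact flat, apply Theorem~\ref{differential-geometric-version} there, use reversibility to force the closed $1$-form to be translation-invariant (hence absorbable into the norm), conclude that the two flat metrics on each flat are affinely equivalent, and deduce the Berwald property from Szab\'o's theorem. Your explicit computation $2\beta_{\mathbf{x}}(\mathbf{v}) = F_0(-\mathbf{v}) - F_0(\mathbf{v})$ and the verification that geodesics restrict correctly to the totally geodesic flat are details the paper leaves implicit, so they are welcome elaborations rather than deviations.
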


\begin{proof}
If the reversible metric $F$ has the same unparametrized geodesics as $g$, then, by Theorem~\ref{differential-geometric-version}, every compact flat of dimension greater than one in $(M,g)$ will be a compact flat in $(M,F)$. Since any two flat (i.e., locally Minkowski) metrics on a manifold are affinely equivalent, and since every geodesic belongs to a common flat, the metrics $F$ and $g$ are affinely equivalent. 
\end{proof}

Explicit examples of Riemannian manifolds satisfying the hypothesis of Corollary~\ref{compact-flats-rigidity} are those that admit a finite covering of the form 
$$ 
T^k \times C_1 \times \cdots \times C_r \times S_1\times \cdots \times S_l \, ,
$$
where $T^k$ is a flat torus, $C_i$ is a simply-connected manifold all of whose geodesics
are closed, and $S_j$ is a simply-connected irreducible, compact symmetric space of  rank at least $2$ 
$(1 \leq i \leq r, 1 \leq j \leq l)$ with the obvious restrictions on $k$, $r$, and $l$. Conversely, Molina and Olmos proved in~\cite{Molina-Olmos:2001} that if a real-analytic, complete Riemannian manifold is such that every geodesic lies on a compact flat, then $M$ must admit such a covering.

\begin{maintheorem}{III} \label{symmetric-spaces}
Let $(M,g)$ be a compact Riemannian symmetric space of rank greater than one. If $F$ is a reversible $C^2$ Finsler metric on $M$ whose unparametrized geodesics coincide with those of $g$, then $F$ is invariant under the geodesic symmetries of $(M,g)$. In other words, $(M,F)$ is a symmetric Finsler space. 
\end{maintheorem}

\begin{proof}
Given that every geodesic of a compact Riemannian space of rank greater than one is contained
in a compact flat, Corollary~\ref{compact-flats-rigidity} tells us that $F$ is affinely
equivalent to the Riemannian metric $g$. We have then that $(M,F)$ is an {\sl affine symmetric Berwald metric} in the terminology of Szab\'o~\cite{Szabo:2006}, and, in point {\it (D)}, page~23 of this paper, Szab\'o shows that for such a metric the geodesic symmetries are isometries (see also Berestovskii's characterization of Finslerian symmetric spaces in~\cite[pp.164--166]{Berestovskii:1985}). 
\end{proof}

An easy consequence of Theorem~\ref{symmetric-spaces} we obtain the following:

\begin{corollary}~\label{Lie-Groups}
 If $F$ is a $C^2$ reversible Finsler metric on compact, connected Lie group $G$ which is neither $SU(2)$ nor $SO(3)$ and $F$ has the same unparametrized geodesics as a bi-invariant Riemannian metric, then it is a bi-invariant metric.
\end{corollary}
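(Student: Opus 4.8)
The plan is to derive the corollary from Theorem~\ref{symmetric-spaces} once a compact connected Lie group carrying a bi-invariant metric is recognized as a Riemannian symmetric space. First I would recall the classical picture: if $g$ is a bi-invariant Riemannian metric on $G$, the geodesics through the identity are the one-parameter subgroups, the inversion $s_e(x) = x^{-1}$ is the geodesic symmetry at $e$, and $s_p(x) = p x^{-1} p$ is the geodesic symmetry at an arbitrary point $p$. Thus $(G,g)$ is a globally symmetric space, and its rank as a symmetric space equals the rank of $G$, i.e. the dimension of a maximal torus; each such maximal torus is a compact flat through every one of its points.

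The next step is to check the rank hypothesis of Theorem~\ref{symmetric-spaces}. Up to finite covers a compact connected Lie group is a product of a torus with simple factors, and the rank of $(G,g)$ is the sum of the ranks of these factors. The only compact connected Lie groups of rank one are the circle, $SU(2)$, and $SO(3)$; the circle is one-dimensional and so lies outside the scope of Theorems~\ref{metric-version} and~\ref{differential-geometric-version} (indeed the rigidity is false in dimension one, since on $S^1$ every reversible metric $a(\theta)|v|$ is projective), while $SU(2)$ and $SO(3)$ are excluded by hypothesis. Hence $(G,g)$ has rank greater than one, and Theorem~\ref{symmetric-spaces} applies: the reversible metric $F$, having the same unparametrized geodesics as $g$, is invariant under every geodesic symmetry $s_p$, so that $(G,F)$ is a symmetric Finsler space.

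It remains to promote invariance under the geodesic symmetries to bi-invariance, and this is where I expect the only genuine work. The point is that products of two geodesic symmetries are transvections: a one-line computation gives $s_p \circ s_q(x) = (pq^{-1})\, x\, (q^{-1}p)$, so the group generated by the $s_p$ contains all the maps $x \mapsto pxp$ (take $q = e$) and in fact coincides, on its identity component, with the transvection group of the symmetric space $(G,g) \cong (G\times G)/\Delta G$. For a compact connected Lie group this transvection group is precisely the group of left-and-right translations $x \mapsto a x b^{-1}$, $(a,b)\in G\times G$. Since $F$ is invariant under every $s_p$, it is invariant under the group they generate and hence under all left and right translations; that is, $F$ is bi-invariant.

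I expect the main obstacle to be this last identification of the group generated by the geodesic symmetries with the full left-right translation group. It is standard symmetric-space theory, but it must be handled with care for reducible $G$ and, separately, for the torus factor, where the products $s_p \circ s_q$ already exhaust all translations because $p \mapsto 2p$ is surjective on a torus. One should also note that the bi-invariant metrics obtained this way need not be Riemannian: bi-invariance only forces the restriction of $F$ to $\mathfrak{g} = T_e G$ to be an $\mathrm{Ad}(G)$-invariant Minkowski norm.
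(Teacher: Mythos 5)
Your proof is correct and takes essentially the same route as the paper: invoke Theorem~\ref{symmetric-spaces} and then use the fact that the geodesic symmetries of a bi-invariant metric generate the identity component of the isometry group of $(G,g)$, namely all left and right translations. The paper's proof states this last fact in a single sentence, while you additionally supply the transvection computation $s_p \circ s_q(x) = (pq^{-1})\,x\,(q^{-1}p)$, the verification that the rank hypothesis holds once $SU(2)$ and $SO(3)$ are excluded, and the (accurate) observation that the degenerate case $G = S^1$ must be set aside --- details entirely consistent with, and filling in, the paper's argument.
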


\begin{proof}
When $G$ is provided with a bi-invariant Riemannian metric it is a symmetric space and thus geodesic symmetries generate the connected component of its isometry group: left and right translations by  $G$. It follows from Theorem~\ref{symmetric-spaces} that $F$ is also invariant under left and right translations by $G$. 
\end{proof}

\section{Questions and remarks} \label{Questions-section}

We close the paper with some natural questions arising from our results. 

\subsection{Hilbert's fourth problem for projective manifolds.}

\begin{question}
Under what conditions does a compact projective manifold admit a continuous metric
or a $C^2$ Finsler metric for which straight lines are geodesics? 
\end{question}

In the continuous case, a generalization of this problem emphasizing systems of curves other than lines was treated by Busemann and Salzmann in~\cite{Busemann-Salzmann:1965} (see also Sections~14 and~15 in~\cite{Busemann:1970}). 

Compact projective manifolds obtained from divisible convex bodies admit a $C^1$ Finsler metric for which straight lines are geodesics (see~\cite{Crampon:2014} for a survey of the subject). On the other hand, we suspect the $C^2$ case to be very rigid. It is possible that if a compact projective manifold $(M,F)$ admits a $C^2$ reversible, projective Finsler metric, then either its fundamental group is finite or $(M,F)$ is affinely equivalent to a Riemannian space form. 

\subsection{A uniqueness theorem for projectively equivalent metrics}

\begin{question}
Let $F_1$ and $F_2$ be two projectively equivalent $C^2$ Finsler metrics defined on an open, geodesically convex set $\mathcal{O} \subset  \mathbb{R}^n$. Assume that both metrics agree outside a compact subset of $\mathcal{O}$. Is it true that $F_1$ and $F_2$ differ only by a differential? 
\end{question}

The first author has proved this generalization of Theorem~\ref{determination-at-infinity}  when $n = 2$, but the proof relies in an essential way on the strictly two-dimensional integral-geometric arguments of~\cite{Alexander:1978} and~\cite{Alvarez-Berk:2010}.   

\subsection{Projective equivalence to a symmetric space}

\begin{question}
If $F$ is a $C^2$ Finsler metric on a compact Riemannian symmetric space $(M,g)$ and the  unparametrized geodesics of $F$ coincide with those of $g$, is $F$ the sum of a Finsler metric that is invariant under the connected component of the isometry group of $(M,g)$ and a closed $1$-form?
\end{question} 

Note that if $F$ is such a metric, then its even part
$$
F_e(\mathbf{v_x}) := \frac{1}{2}(F(\mathbf{v_x}) + F(-\mathbf{v_x}))
$$
is a Finsler metric which is reversible and has the same unparametrized
geodesics as $F$ (and $g$). Theorem~\ref{symmetric-spaces} tells us that $F_e$ is invariant under the 
geodesic symmetries of $(M,g)$ and hence under the connected component of its isometry group. The difficulty is that we cannot say too much about the odd part, 
$$
F_o(\mathbf{v_x}) := \frac{1}{2}(F(\mathbf{v_x}) - F(-\mathbf{v_x})) .
$$
We do know, by Theorem~\ref{differential-geometric-version}, that the pullback of this $1$-density to every flat torus in $(M,g)$ is a translation-invariant $1$-density plus a closed $1$-form. On the other hand the main result in~\cite{Alvarez:2013} tells us that the pullback of $F_o$ to every Helgason sphere is a closed $1$-form.


\bibliography{paperbib}
\bibliographystyle{amsplain}

\end{document}